\documentclass[12pt]{amsart}
\usepackage{graphicx} % Required for inserting images
\usepackage{amssymb,latexsym,eucal}

\usepackage{hyperref}
\usepackage[letterpaper,centering,text={6.5in,9in}]{geometry}
\usepackage{latexsym}
\usepackage[utf8]{inputenc}
\usepackage{amsmath}
\usepackage{amsthm}
\usepackage{bigints}
\usepackage{amsfonts}
\usepackage{amssymb}
\usepackage{epsfig}
\usepackage{nicefrac}
\usepackage[all]{xy}
\usepackage{graphicx}
\usepackage{enumerate}
\usepackage{enumitem}
\usepackage{mathtools}    
\DeclarePairedDelimiterX\setc[2]{\{}{\}}{\,#1 \;\delimsize\vert\; #2\,}
\usepackage{float}
\usepackage{pgfplots}
\usepackage{bbm}
\usepackage{caption,subcaption}
\usepackage{breqn}
\usepackage{amsaddr}
\usepackage{tikz}
\usetikzlibrary{
intersections, arrows.meta,
automata,er,calc,
backgrounds,
mindmap,folding,
patterns,
decorations.markings,
fit,decorations.pathmorphing,
shapes,matrix,
positioning,
shapes.geometric,
arrows,through, 
}
\usepackage{tikz-cd}

\def\bigmid{\ \rule[-3.5mm]{0.1mm}{9mm}\ }

\newtheorem{theorem}{Theorem}[section]

\newtheorem{proposition}[theorem]{Proposition}
\newtheorem{lemma}[theorem]{Lemma}

\theoremstyle{definition}

\newtheorem{remark}[theorem]{Remark}

\newtheorem{definition}[theorem]{Definition}

\newcommand{\RR}{{\mathbb R}}

\newcommand{\QQ}{{\mathbb Q}}

\newcommand{\ton}{{\otimes_{\Lambda_{\geq 0}}}}

\newcommand{\lr}{{\,\,\longrightarrow\,\,}}

\title[A lower bound for relative symplectic cohomology barcode entropy]{A lower bound for relative symplectic cohomology barcode entropy}
\author[Jonghyeon Ahn]{Jonghyeon Ahn}
\newcommand{\Addresses}{{
\bigskip
\bigskip
\footnotesize
\textsc{Institute for Basic Science, Center for Geometry and Physics, Pohang, 37673, South Korea}\par\nopagebreak
\textit{E-mail address}: \texttt{jahn@ibs.re.kr}}}

\date{}

\begin{document}

\maketitle
\begin{abstract}
   In this paper, we continue to study the barcode entropy of relative symplectic cohomology $SH_M(K)$ of a Liouville domain $K$ embedded in a symplectic manifold $M$. This barcode entropy measures the exponential growth rate of the number of not-too-short bars in the persistence module $SH_M(K)$. We prove that this Floer-theoretic invariant admits a nontrivial lower bound in terms of the topological entropy of the Reeb flow on $\partial K$ when the Reeb flow possesses a hyperbolic invariant set. More precisely, we show that the barcode entropy of $SH_M(K)$ is bounded below by the topological entropy of the Reeb flow restricted to a hyperbolic invariant set.
\end{abstract}

\maketitle
\tableofcontents

\section{Introduction}
\subsection{Motivation}

The \textit{barcode entropy} of a persistence module, which was introduced by Cineli, Ginzburg, and Gürel \cite{cgg}, measures the exponential growth rate of the number of ``not-too-short'' bars in the barcode of the persistence module. In many cases, barcode entropy has been shown to be closely related to the underlying dynamics.

In their original work \cite{cgg}, Cineli, Ginzburg, and Gürel studied the persistence module associated to the Lagrangian Floer cohomology of pairs of Hamiltonian isotopic Lagrangian submanifolds. From this construction, one obtains the barcode entropy $\hbar(\varphi_H)$ of a Hamiltonian diffeomorphism $\varphi_H$. A central result of their work is that $\hbar(\varphi_H)$ is strongly related to the dynamics of $\varphi_H$. More precisely, they proved that
\begin{align}\label{int1}
    \hbar(\varphi_H)\leq h_{\mathrm{top}}(\varphi_H),
\end{align}
where $h_{\mathrm{top}}(\varphi_H)$ denotes the topological entropy of $\varphi_H$. Moreover, they established a nontrivial lower bound for $\hbar(\varphi_H)$ in terms of the dynamics of $\varphi_H$, namely,
\begin{align}\label{int2}
    \hbar(\varphi_H)
    \geq
    h_{\mathrm{top}}(\varphi_H|_V)
\end{align}
whenever $V$ is a hyperbolic set for $\varphi_H$. Analogous results to \eqref{int1} and \eqref{int2} have since been established in many other Floer-theoretic settings; see \cite{a2,cggm,f,f2,fls,ggm,m}.

Among many Floer-theoretic settings, we focus on the \textit{relative symplectic cohomology}, introduced by Varolgunes \cite{v}. For a closed symplectic manifold $(M,\omega)$ and a compact subset $K \subset M$, Varolgunes constructed the relative symplectic cohomology $SH_M(K)$, which carries the relative symplectic information of $K$ in $M$. The barcode entropy $\hbar(SH_M(K))$ of $SH_M(K)$ was introduced in \cite{a2}, and in the case where $K\subset M$ is a Liouville domain, it was shown to admit an upper bound in terms of the topological entropy of the Reeb flow on the boundary $\partial K$, analogous to \eqref{int1}. The goal of this paper is to derive a nontrivial lower bound for $\hbar(SH_M(K))$, providing a counterpart of \eqref{int2} in the setting of relative symplectic cohomology.

\subsection{Main result} In this subsection, we state the main result of the paper, which provides a nontrivial lower bound for the barcode entropy of relative symplectic cohomology in terms of the topological entropy of the Reeb flow restricted to a hyperbolic invariant set. Thus, the dynamical complexity of the Reeb flow is reflected in the persistence structure of relative symplectic cohomology.

For a closed symplectic manifold $(M,\omega)$ and a compact subset $K\subset M$, Varolgunes \cite{v} introduced the relative symplectic cohomology $SH_M(K)$ of $K$ in $M$. Our setting for viewing this relative symplectic cohomology as a persistence module is as follows. We assume that $\omega|_{\pi_2(M)}=0$ so that the action functional of a Hamiltonian orbit is well-defined. Moreover, the subset $K$ is a Liouville domain, meaning that there exists a vector field $X$ defined on $K$ such that $\mathcal{L}_X\omega = \omega$.  The boundary \(\partial K\) naturally carries a contact structure whose contact form is given by $ \alpha = \iota_X \omega|_{\partial K}$. Finally, we assume that the boundary $\partial K$ is incompressible, namely,  the homomorphism $ \pi_1(\partial K)\lr\pi_1(M)$ induced by inclusion is injective. The incompressibility of $\partial K$ implies that a loop in $K$ is contractible in $K$ if and only if it is contractible in $M$.

Following \cite{cgg}, we define the \textbf{relative symplectic cohomology barcode entropy} of $SH_M(K)$, denoted by $\hbar(SH_M(K))$. Roughly speaking, this invariant measures the exponential growth rate of the number of sufficiently ``not-too-short" bars in the barcode of the persistence module $SH_M(K)$. For any $\epsilon>0$, let $$b_\epsilon(\textrm{tru}(SH_M(K)),\sigma)$$ be the number of bars of the truncation of the persistence module $SH_M(K)$ at $\sigma$ whose lengths are greater than $\epsilon$. Note that $b_\epsilon(\textrm{tru}(SH_M(K)),\sigma)$ is an increasing function of $\sigma$ and a decreasing function of $\epsilon$. We define
\begin{align*}
    \hbar_\epsilon(SH_M(K)) = \limsup_{\sigma \to \infty} \frac{1}{\sigma}\log^+b_\epsilon(\textrm{tru}(SH_M(K)),\sigma),
\end{align*}
and
\begin{align*}
    \hbar(SH_M(K)) = \lim_{\epsilon \to 0} \hbar_\epsilon(SH_M(K))
\end{align*}
where $\log^+a = \max\{\log_2 a, 0\}$.

\begin{theorem}[Theorem \ref{mainthm}]\label{thmb}
    Let $(M,\omega)$ be a closed symplectic manifold and  $K \subset M$ be a Liouville domain with incompressible boundary. Assume that $\omega|_{\pi_2(M)}=0$. Let $\varphi_\alpha^t$ be the Reeb flow on $(\partial K, \alpha)$. Then
    \begin{align*}
        h_{\textrm{top}}(\varphi^t_\alpha|_V) \leq \hbar(SH_M(K)) 
    \end{align*}
    whenever $V$ is a compact hyperbolic invariant set for $\varphi_\alpha^t$.
\end{theorem}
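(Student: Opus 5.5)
We follow the strategy of Cineli--Ginzburg--Gürel for the lower bound \eqref{int2}, adapted to the Reeb flow and to relative symplectic cohomology. Three ingredients are needed: a dynamical count of periodic Reeb orbits in $V$, a Floer-theoretic argument that these orbits produce bars of definite length, and a comparison between the Hamiltonian Floer complexes and $SH_M(K)$.

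\textbf{Dynamical input.} By the Bowen-type growth result for hyperbolic sets, after replacing $V$ by a locally maximal hyperbolic set (a horseshoe) whose entropy is arbitrarily close to $h_{\mathrm{top}}(\varphi^t_\alpha|_V)$, the following holds. The number $N(T)$ of closed Reeb orbits of period at most $T$ lying in $V$, all of them hyperbolic and hence nondegenerate, satisfies
\[
\limsup_{T\to\infty}\tfrac{1}{T}\log N(T)\geq h_{\mathrm{top}}(\varphi^t_\alpha|_V)-\delta .
\]
The horseshoe is obtained via Katok-type approximation, or directly by the shadowing and spectral decomposition of a hyperbolic flow. These orbits also satisfy uniform separation and local isolation: each one has a neighborhood of uniform size containing no other orbit of comparable period. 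Since $\partial K$ is incompressible, contractibility of loops in $K$ agrees with contractibility in $M$. We must restrict to the free-homotopy classes actually counted by $SH_M(K)$; if $SH_M(K)$ uses only contractible orbits, we arrange the horseshoe count within the relevant class.

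\textbf{Floer-theoretic step.} $SH_M(K)$ is computed from a cofinal sequence of Hamiltonians $H_n$ that are $C^2$-small on $K$, radial of slope $\approx n$ on the collar $\partial K\times[1,1+\eta]$, and large outside. Each closed Reeb orbit $\gamma\subset V$ of period $T<n$ produces a pair of nondegenerate $1$-periodic orbits of $H_n$ near $\partial K\times\{r_T\}$. Their action is close to $T$, up to a controlled affine error from the radial profile, which the truncation at $\sigma$ accounts for.

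The key claim, and the main obstacle, is that each such hyperbolic orbit contributes an endpoint of a bar of length at least some $\epsilon_0>0$ independent of $T$, in the truncated barcode at level $\sigma\approx T$. The plan is to show that a Floer cylinder connecting a hyperbolic orbit in $V$ to any other generator carries energy bounded below by $\epsilon_0$. This is a crossing-energy argument in the spirit of Ginzburg--Gürel. Under the hyperbolicity assumption, a cylinder asymptotic to an orbit in the isolating neighborhood but leaving it must cross a shell. By the uniform hyperbolicity (cone fields) and a maximum principle in the collar direction, this costs energy bounded below. Consequently local Floer homology is well defined, and each such orbit survives as an endpoint of a bar longer than $\epsilon_0$. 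To make this uniform in $n$, the estimate should be phrased on the symplectization $\partial K\times[1,1+\eta]$ in terms of the Reeb flow rather than $H_n$. First-try option: run the crossing-energy argument for the Reeb flow lifted to the symplectization, with almost complex structures cylindrical on the collar.

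\textbf{Passing to $SH_M(K)$ and concluding.} We pass from the Floer complexes of $H_n$ to the completed telescope defining $SH_M(K)$, using the earlier comparison (as in the upper-bound paper \cite{a2}) that $\mathrm{tru}(SH_M(K),\sigma)$ is $\epsilon$-interleaved with the Floer persistence module of $H_n$ for $n\gg\sigma$. The interleaving and stability theorem shows that bars longer than $2\epsilon<\epsilon_0$ persist. Hence, for $\epsilon$ small and up to subexponential factors,
\[
b_\epsilon(\mathrm{tru}(SH_M(K)),\sigma)\geq \tfrac12 N(\sigma-C).
\]
Taking $\limsup\frac1\sigma\log$, then $\epsilon\to0$ and $\delta\to0$, yields $h_{\mathrm{top}}(\varphi^t_\alpha|_V)\leq\hbar(SH_M(K))$.
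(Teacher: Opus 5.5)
Your dynamical reduction and your bar count are the same as in the paper. The reduction passes to a locally maximal hyperbolic set and uses the growth of its periodic orbits. The bar count says that a uniform crossing-energy bound forces at least half of the hyperbolic generators to bound bars longer than $\epsilon_0$, as in Proposition 3.8 of \cite{cgg}.

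\textbf{The gap: passing to $SH_M(K)$.} You assume an $\epsilon$-interleaving between $\mathrm{tru}(SH_M(K),\sigma)$ and the filtered Floer module of a single $H_n$. For the full module this is neither available nor true. The Floer complex of a Hamiltonian that is radial on the collar and large outside $K$ also contains the \emph{upper} orbits. These come from the concave part of the profile and from the constant orbits outside $K\cup(\partial K\times[1,1+3r_1])$, and they do not contribute to $SH_M(K)$ at all. For instance, the constant orbits outside have action about $nc_H>0$. They already give nonzero $HF^{>-\tau}(H_n)$ for some $\tau<0$, where $SH^{>-\tau}_M(K)=0$.

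Producing the right comparison is the main new content of Section~\ref{sec:section3}. It proceeds in four steps.
\begin{enumerate}
\item No Floer or monotone continuation trajectory runs from an upper orbit to a lower one (Proposition~\ref{nof}, proved by showing that $f(s)=\int_{S^1}\ln r(s,t)\,dt$ is nondecreasing). So upper orbits span a subcomplex $CF_U$ and lower orbits a quotient complex $CF_L$.
\item The completed telescope of $CF_U^{>-\tau}(\sigma H)$ vanishes, because the continuation maps on it carry a uniform positive energy (Lemma~\ref{lem1}).
\item The lower telescope needs no completion (Lemma~\ref{lem2}). Hence $SH^{>-\tau}_M(K)\cong\varinjlim_\sigma HF^{>-\tau}_L(\sigma H)$ (Lemma~\ref{lem3}).
\item Via the rescaling of \cite{cggm2}, $SH^{>-\tau}_M(K)\cong HF_L^{>a_H(\tau)}(H)$ with $-1-r_1\le a_H'\le -1$. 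This gives $\hbar(SH_M(K))=\hbar_L(H)/s_H$ (Proposition~\ref{pr3}).
\end{enumerate}
Without this, nothing guarantees that your long bars in $HF(H_n)$ survive in $SH_M(K)$.

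\textbf{The uniformity problem you flag.} This is a second missing piece, and the paper resolves it differently from your suggestion. With $n\gg\sigma$, the orbits of period at most $\sigma$ sit at $r_T\to 1$, where no crossing-energy bound uniform in $n$ is available. Theorem~\ref{cros} (Theorems 5.1 and 6.1 of \cite{cggm2} plus monotonicity) needs $h'''\ge 0$ and $r_*$ in a fixed compact interval $[a,b]\subset(1,1+r_1)$.

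The paper therefore never takes $n\gg\sigma$. It works with $\sigma H$ itself:
\begin{enumerate}
\item It counts only orbits in $V$ with $r_*\in(a,b]$, i.e.\ periods in $(\sigma h'(a),\sigma h'(b)]$.
\item Using $x\le\max\{2(x-y),2y\}$, it shows this count still grows at rate $h'(b)\,h_{\mathrm{top}}(\varphi^t_\alpha|_V)$.
\item It needs the energy bound only for trajectories between lower orbits, since these are the only ones the quotient $CF_L$ sees. Cylinders running into the concave region never have to be controlled.
\end{enumerate}
The bi-Lipschitz control of $a_H$ replaces your small interleaving. The loss factor $h'(b)/s_H$ is removed by letting $b\to 1+r_1$.
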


The main idea of the proof of Theorem \ref{thmb} is as follows. For a suitable Hamiltonian function $H$, we divide its Hamiltonian orbits into two classes: lower orbits and upper orbits. We prove that there exists no Floer trajectory of $H$ which is asymptotic to an upper orbit at $-\infty$ and to a lower orbit at $+\infty$ (Proposition \ref{nof}). Using this fact, we show that only lower orbits contribute to the filtered relative symplectic cohomology (Lemma \ref{lem1}, Lemma \ref{lem2} and Lemma \ref{lem3}). Moreover, we prove that the filtered relative symplectic cohomology can be represented, with control of the action window, by the Floer cohomology generated by the lower orbits of a single Hamiltonian function (Theorem \ref{a2t}). The proof of the main result (Theorem \ref{mainthm}) then closely follows the argument in \cite{cggm2}.
\\

\noindent
\textbf{Organization of the paper.} Since the material presented in this paper draws on several areas of mathematics, we begin in Section \ref{sec:section2} by reviewing the basic notions and facts from each subject that will be used throughout the paper. 
In Section \ref{sec:section3}, we reformulate the definition of relative symplectic cohomology for a Liouville domain by establishing restrictions on the behavior of Floer trajectories for suitably chosen Hamiltonian functions. We then show that the barcode entropy can be described in terms of the barcode entropy associated to a single Hamiltonian function. Finally, the proof to establish the lower bound for the relative symplectic cohomology barcode entropy, Theorem \ref{thmb}, is carried out in Section \ref{sec:section4}.

\section{Preliminaries}\label{sec:section2}
\subsection{From topological data analysis} We briefly review persistence modules, originally developed in topological data analysis; most of the exposition is adapted from \cite{csgo, prsz}. Let us fix a ground field $\mathbbm{k}$ and we usually drop this from our notation. For our purposes, we restrict attention to persistence modules satisfying the following conditions.

\begin{definition}
A \textbf{persistence module} is a pair $(V, \pi)$ where $V$ is a collection $\{V_\tau\}_{\tau\ \in\RR}$ of finite dimensional vector spaces over $\mathbbm{k}$ and $\pi = \{\pi_{\tau_1\tau_2} : V_{\tau_1} \lr V_{\tau_2}\}_{\tau_1, \tau_2 \in \RR}$ is a collection of linear maps from $V_{\tau_1}$ to $V_{\tau_2}$ for $\tau_1\leq \tau_2$ satisfying the following:
    \begin{enumerate}[label=(\alph*)]
        \item (Persistence) For any $\tau \in \RR$, $\pi_{\tau\tau} = \textrm{id}_{V_\tau}$ and for any $\tau_1 \leq \tau_2 \leq \tau_3$, $\pi_{\tau_1 \tau_3} = \pi_{\tau_2 \tau_3} \circ \pi_{\tau_1 \tau_2}$.
        \begin{align*}
            \includegraphics[]{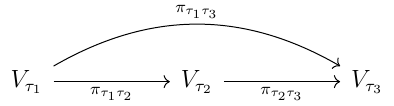}
        \end{align*}
        \item (Locally constant) There exists a closed, bounded from below and nowhere dense subset $\textrm{Spec}(V, \pi) \subset \RR$, which is called the \textbf{spectrum} of $(V, \pi)$ such that $\pi_{\tau_1 \tau_2} : V_{\tau_1} \lr V_{\tau_2}$ is an isomorphism whenever $\tau_1$ and $\tau_2$ are in the same connected component of $\RR \setminus \textrm{Spec}(V,\pi)$.
        \item (Semicontinuity) For any $\tau \in \RR$ and any $\tau' \leq \tau$ sufficiently close to $\tau$, the map $\pi_{\tau' \tau} : V_{\tau'} \lr V_\tau$ is an isomorphism.
        \item (Lower bound) There exists $\tau_0 \in \RR$ such that $V_\tau = 0$ for $\tau\leq \tau_0$.
 \end{enumerate}
    We will usually use the notation $V$ to denote the persistence module $(V,\pi)$ when it is clear from the context.
\end{definition}

One important example of a persistence module is an interval persistence module. For an interval $I = (a,b], $ where $-\infty < a < b \leq\infty$, the \textbf{interval persistence module} $\mathbbm{k}I = (V, \pi)$ is defined by
\begin{align*}
    V_\tau = \begin{cases}
        \mathbbm{k} & \text{if}\,\,\tau \in I \\
        0 & \text{if} \,\, \tau\notin I,
    \end{cases}
    \,\,\,\,\textrm{and}\,\,\,\,\pi_{\tau_1 \tau_2} = \begin{cases}
        \textrm{id}_{\mathbbm{k}} & \text{if}\,\,\tau_1,\tau_2 \in I\\
        0 &\text{otherwise.}
    \end{cases}
\end{align*}

A fundamental result in the theory of persistence modules is the \textit{normal form theorem}, asserting that every persistence module admits a decomposition into interval modules. The precise statement is as follows.
\begin{theorem}[Normal form theorem]\label{nft}
    Let $(V, \pi)$ be a persistence module. Then there exists a countable collection $\{ (I_i, m_i)\}_{i=1,2,3,\cdots}$ of intervals $I_i$ with multiplicity $m_i$ such that
    \begin{align*}
        (V, \pi) \cong \bigoplus_{i=1}^\infty \left(\mathbbm{k} I_i\right)^{m_i}.
    \end{align*}
    Moreover, this decomposition is unique up to permutation, that is, to any persistence module $(V,\pi)$, there exists a unique multiset $\mathcal{B}(V,\pi)$ consisting of intervals $I_i$ with multiplicity $m_i$. This multiset $\mathcal{B}(V,\pi)$ is called the \textbf{barcode} of the persistence module $(V,\pi)$ and an element of $\mathcal{B}(V,\pi)$ is called a \textbf{bar} of $(V,\pi)$.
\end{theorem}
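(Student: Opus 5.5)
The statement here is the Normal form theorem for persistence modules. It is a standard structural result; I would prove it by a pointwise-finite decomposition argument adapted to the locally constant setting.

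\emph{Reduction to a discrete module.} By the locally constant and semicontinuity conditions, $V$ is determined by its values on $\textrm{Spec}(V,\pi)$ together with the transition maps. Since the spectrum is closed, bounded below and nowhere dense, its points $s_0<s_1<\cdots$ are discrete on every bounded-above window where they are finitely many. I will first treat the case where every truncation $\tau\le T$ meets only finitely many spectral values; this is the case used throughout the paper. Then $V$ restricted to $(-\infty,T]$ is equivalent to a finite sequence of finite-dimensional spaces $V_{s_0}\to V_{s_1}\to\cdots\to V_{s_N}$. Semicontinuity ensures that $V_\tau\cong V_{s_k}$ for $\tau\in[s_k,s_{k+1})$, which is why the bars come out half-open as $(a,b]$ after the usual convention. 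One should double-check the endpoint convention against condition (c).

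\emph{Finite type case.} A finite sequence of finite-dimensional vector spaces is a representation of the $A_n$ quiver. By Gabriel's theorem, or directly by induction, it splits as a direct sum of interval representations. For the direct induction, I would choose bases compatibly. First pick a complement of $\ker$ inside each image filtration $\operatorname{im}(V_{s_i}\to V_{s_k})\cap\ker(V_{s_k}\to V_{s_j})$. Then use the finite-dimensional linear algebra fact that two finite filtrations (by images from the left and kernels to the right) admit a common adapted basis. This produces generators living exactly on intervals $[s_i,s_j)$, which translate to interval modules.

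\emph{Passage to the limit.} I would show that the decompositions on $(-\infty,T]$ can be chosen compatibly as $T\to\infty$. Equivalently, define the multiplicity of $(a,b]$ intrinsically via the rank function as
\[
m_{(a,b]}=r(a,b)-r(a^-,b)-r(a,b^+)+r(a^-,b^+),
\]
where $r(s,t)=\operatorname{rank}\pi_{st}$. This gives both existence, by assembling the adapted bases inductively, and uniqueness, since these multiplicities are determined by $V$. Countability follows because there are only countably many spectral points.

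\emph{Main obstacle.} The main obstacle is the general case, where infinitely many spectral points accumulate from below at a finite level. Here I would instead invoke Crawley-Boevey's theorem for pointwise finite-dimensional modules over $\RR$, which gives a decomposition for arbitrary such modules, and then use conditions (b)--(d) only to pin down the endpoint types of the bars.
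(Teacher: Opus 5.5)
The paper gives no proof of this statement; it quotes it from \cite{csgo, prsz}. Your plan is the standard argument behind those references: an elementary basis/quiver argument when the spectrum is locally finite, Crawley-Boevey's theorem for arbitrary pointwise finite-dimensional modules, and then reading off the endpoint types from (c) and (d). However, you have the semicontinuity backwards. Since (c) is what fixes the shape of the bars, this is an actual error, not a choice of convention.

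Condition (c) is \emph{left} semicontinuity. Together with (b) it gives $V_\tau\cong V_{s_{k+1}}$ for $\tau\in(s_k,s_{k+1}]$, not $V_\tau\cong V_{s_k}$ for $\tau\in[s_k,s_{k+1})$. Your claim already fails for $V=\mathbbm{k}(a,b]$ at $s_0=a$: there $V_a=0$, while $V_\tau=\mathbbm{k}$ for $\tau\in(a,b)$. In fact (b)--(d) force $V_{s_0}=0$ at the smallest spectral point. So in the discrete model the sample $V_{s_k}$ represents $(s_{k-1},s_k]$, and a quiver bar supported on the samples $s_i,\dots,s_j$ is the real bar $(s_{i-1},s_j]$, not $[s_i,s_j)$.

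The reversal also distorts your description of the hard case. If $\pi_{\tau'\tau}$ is an isomorphism for all $\tau'\in(\tau-\delta,\tau]$, then so is $\pi_{\tau_1\tau_2}=\pi_{\tau_2\tau}^{-1}\circ\pi_{\tau_1\tau}$ for all $\tau-\delta<\tau_1\le\tau_2\le\tau$. Hence the points where $V$ fails to be locally constant never accumulate from below. What must be handled beyond the locally finite case is accumulation from above, as in $\bigoplus_{n\ge1}\mathbbm{k}(0,1/n]$. Your fallback to Crawley-Boevey covers this, and (c) and (d) then force every summand to be $\mathbbm{k}(a,b]$ with $-\infty<a<b\le\infty$. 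So existence survives once the direction is corrected.

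Your uniqueness step, however, fails as written. If $V\cong\bigoplus_i\mathbbm{k}(a_i,b_i]$, then $r(s,t)=\#\{i : a_i<s,\ t\le b_i\}$. For $V=\mathbbm{k}(a,b]$, every term of your expression has source $V_a$ or $V_{a-\delta}$, both zero, so it returns $0$ instead of $1$. Your formula is the one for closed bars $[i,j]$ over a discrete index set. For the bars here the correct inclusion--exclusion is
\[
m_{(a,b]}=r(a^+,b)-r(a,b)-r(a^+,b^+)+r(a,b^+),\qquad a^+=a+\delta,\quad b^+=b+\delta,\quad \delta\to0^+,
\]
together with $m_{(a,\infty)}=\lim_{t\to\infty}\lim_{\delta\to0^+}\bigl(r(a+\delta,t)-r(a,t)\bigr)$. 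The limits stabilize because only the finitely many bars through $b$ (resp.\ through a fixed $t$) are involved. Alternatively, uniqueness follows from the Krull--Remak--Schmidt--Azumaya theorem, since $\operatorname{End}(\mathbbm{k}I)=\mathbbm{k}$ is local.

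Two smaller points. First, countability should not be derived from the spectrum, which (b) alone does not make countable. Instead, each bar contains a rational $q$, and, counted with multiplicity, exactly $\dim V_q<\infty$ bars contain $q$. Second, in the locally finite case, decompositions chosen separately on the windows $(-\infty,T]$ need not extend to larger windows. What works is a left-to-right construction in which the basis of each sample $V_{s_k}$ is adapted to the finite kernel filtration $\{\ker\pi_{s_k t}\}_{t>s_k}$. Then the nonzero images of the previous basis vectors can always be completed to the next such basis. This runs over the whole sequence at once, with no separate passage to the limit.
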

We close this subsection by defining the barcode entropy of a persistence module. Before giving the definition, we introduce a useful operation on persistence modules. Let $\sigma\in \RR$. The \textbf{truncation} $\text{tru}(V,\pi ;\sigma ) = (\text{tru}(V;\sigma), \text{tru}(\pi;\sigma))$ of $(V,\pi)$ at $\sigma$ is defined by
        \begin{align*}
            \text{tru}(V;\sigma)_\tau = \begin{cases}
                V_\tau &\text{if}\,\, \tau<\sigma\\
                0 &\text{otherwise}
            \end{cases}
            \,\,\,\,\textrm{and}\,\,\,\,\text{tru}(\pi;\sigma)_{\tau_1 \tau_2} = \begin{cases}
                \pi_{\tau_1 \tau_2}&\text{if}\,\, \tau_2< \sigma\\
                0 &\text{otherwise}.
            \end{cases}
         \end{align*}
       
\begin{definition}\label{bcd}
    Let $(V, \pi)$ be a persistence module. For any $\epsilon>0$, let $ \mathcal{B}_{\epsilon}(V)$ be the set of barcodes of $V$ whose lengths are greater than $\epsilon$ and let $b_{\epsilon}(V)$ be the number of bars in $\mathcal{B}_{\epsilon}(V)$, i.e., $b_{\epsilon}(V) = | \mathcal{B}_{\epsilon}(V)|$. The \textbf{$\epsilon$-barcode entropy} $\hbar_{\epsilon}(V)$ of $V$ is defined to be
\begin{align*}
    \hbar_{\epsilon}(V) = \limsup_{\sigma \to \infty} \frac{1}{\sigma}\log^+ b_{\epsilon}(\textrm{tru}(V,\sigma)),
\end{align*}
where $\log^+a = \max\{\log_2 a, 0\}$. The \textbf{barcode entropy} $\hbar(V)$ of $V$ is defined by
    \begin{align*}
        \hbar(V) = \lim_{\epsilon \to 0} \hbar_{\epsilon}(V).
    \end{align*}
\end{definition}

\subsection{From Floer theory} In this subsection, we review the basic aspects of Floer theory needed later in the paper. For more detailed exposition, see \cite{sala}.
\subsubsection{Hamiltonian Floer cohomology}
The \textbf{Novikov field} $\Lambda$ is defined by
\begin{align*}
    \Lambda = \left\{ \sum_{i=1}^\infty a_i T^{\lambda_i} \bigmid a_i \in \QQ, \lambda_i \in \RR\,\,\text{and}\,\, \lim_{i \to \infty} \lambda_i = \infty \right\} 
\end{align*}
where $T$ is a formal variable. There is a \textbf{valuation map} $\textrm{val} : \Lambda \to \RR \cup \{\infty\}$ given by
\begin{align*}
    \textrm{val} (x) = 
    \begin{cases}
      \displaystyle\min_{i} \{\lambda_i \mid a_i \neq 0 \} \,&\text{if}\,\, x = \displaystyle\sum_{i=1}^\infty a_i T^{\lambda_i} \neq 0\\
      \infty \,&\text{if}\,\, x = 0.
    \end{cases}   
\end{align*}
For any $r \in \RR$, define $\Lambda_{\geq r} = \textrm{val}^{-1}([r,\infty])$. In particular, we call
\begin{align*}
    \Lambda_{\geq 0} = \left\{ \sum_{i=1}^\infty a_i T^{\lambda_i} \in \Lambda \bigmid  \lambda_i \geq 0 \right\}
\end{align*}
the \textbf{Novikov ring}. 

Let $(M, \omega)$ be a symplectic manifold and let $H : S^1 \times M \to \RR$ be a Hamiltonian function on $M$. The \textbf{Hamiltonian vector field} $X_H$ of $H$ is defined by 
\begin{align*}
    \iota_{X_H} \omega = dH.
\end{align*}
We say that a Hamiltonian function $H :S^1 \times M \to \RR$ is \textbf{nondegenerate} if every 1-periodic orbit $x$ of $X_H$ is nondegenerate, that is, for the Hamiltonian flow $\phi_H^t$ of $H$, the map $$(d \phi^1_H)_{x(0)} : T_{x(0)}M \lr T_{x(1)}M = T_{x(0)}M$$ has no eigenvalue equal to 1. We denote the set of all nondegenerate contractible 1-periodic orbits of $X_H$ by $\mathcal{P}(H)$. For $x \in \mathcal{P}(H)$, let $c_x : D^2 \lr M$ be a disk capping of $x$. The \textbf{action functional} of the capped orbit $(x, c_x)$ is defined by
\begin{align}\label{act}
    \mathcal{A}_H(x, c_x) = \int_{D^2} c_x^* \omega + \int_{S^1} H(t, x(t))dt.
\end{align}

The \textbf{Floer complex} of $H$ is defined by
\begin{align*}
    CF^*(H) = \bigoplus_{x \in \mathcal{P}(H)} \Lambda_{\ge 0} \langle x \rangle.
  \end{align*}
Fix a generic almost complex structure $J$ on $M$ compatible with $\omega$, namely,
\begin{align*}
    g : TM \times TM \lr \RR, \,\,g(v,w) = \omega(v,Jw)
\end{align*}
is a Riemannian metric on $M$. For $x, y \in \mathcal{P}(H)$, let $\pi_2(M; x, y)$ be the set of homotopy classes of smooth maps from $\RR \times S^1$ to $M$ asymptotic to $x$ and $y$ at $-\infty$ and $+\infty$, respectively. For $x,y \in \mathcal{P}(H)$ and $A \in \pi_2(M;x,y)$, the moduli space $\mathcal{M}(H,J; x, y; A)$ consists of smooth maps $u : \RR \times S^1 \lr M$ satisfying the Floer equation
\begin{align}\label{fee}
    \partial_s u + J(u) \left( \partial_t u - X_{H}(u) \right) = 0,
\end{align}
where $\partial_s = \frac{\partial }{\partial s}$ and $\partial_t = \frac{\partial }{\partial t}$, together with the asymptotic conditions
\begin{align*}
    \lim_{s \to -\infty} u(s,t) = x(t) \,\,\textrm{and} \,\,\lim_{s \to \infty} u(s,t) = y(t).
\end{align*}
The \textbf{Floer differential} $d : CF^*(H) \lr CF^{*}(H)$ of the Floer complex $CF(H)$ is given by
\begin{align*}
    d x = \sum_{\substack{y \in \mathcal{P}(H)  \\A \in \pi_2(M; x, y)}} \#\left(\mathcal{M}(H,J ; x, y; A)/\RR\right)\, T^{E_{\text{top}}(u)} y
\end{align*}
where $\#\left(\mathcal{M}(H,J ; x, y; A)/\RR\right)$ denotes the count of Floer trajectories modulo the $\RR$-action $(s_0 \cdot u)(s,t) = u(s+s_0,t) $ and the \textbf{topological energy} $E_{\text{top}}(u)$ of a Floer trajectory $u$ is given by
\begin{align*}
    E_{\text{top}}(u) = \int_{S^1} H(t,y(t)) dt - \int_{S^1} H(t,x(t)) dt + \omega(A).
\end{align*}
Since the Floer differential $d$ satisfies $d \circ d = 0$, the \textbf{Floer cohomology} $HF^*(H)$ of $H$ is defined to be 
\begin{align*}
    HF^*(H) = H^* \left( CF(H), d \right).
\end{align*}
Moreover, the Floer cohomology of $H$ with coefficients in $\Lambda$ is defined by
\begin{align*}
    HF^*(H ; \Lambda) = H^*(CF(H)\ton \Lambda) = HF^*(H)\ton \Lambda. 
\end{align*}

For a Floer trajectory $u: \RR \times S^1 \lr M$ connecting $x$ to $y$ and capped orbits $(x,c_x)$ and $(y,c_y)$ with $c_y = c_x \# u$ where $\#$ denotes the connected sum, a standard computation shows that
\begin{align}\label{acf}
    E_{\text{top}}(u)& = \mathcal{A}_H(y,c_y) - \mathcal{A}_H(x,c_x)\nonumber\\
    &= \int_{\RR \times S^1}\omega \left(\partial_su, J(u)\partial_su\right) ds\wedge dt
    \\&\ge0\nonumber
\end{align}
where the last inequality follows from the compatibility of $J$ with $\omega$.

For two Hamiltonian functions $H_0$ and $H_1$ with $H_0 \leq H_1$, there exists a \textbf{continuation map} 
\begin{align*}
   c^{H_0, H_1} : CF^*(H_0) \lr CF^*(H_1)
\end{align*}
defined by an appropriate count of Floer trajectories of a monotone homotopy $H_s$ from $H_0$ to $H_1$. As in the definition of the Floer differential, each Floer trajectory is weighted by $T^{E_{\mathrm{top}}(u)}$, where
\begin{align*}
    E_{\text{top}}(u) &= \int_{S^1}H_1(t,y(t)) dt - \int_{S^1} H_0(t,x(t)) dt + \omega(A).
   \end{align*}
   Equivalently, if $c_x$ and $c_y$ are cappings satisfying $c_y=c_x\#u$, then
\begin{align}\label{acc}
    E_{\text{top}}(u) &= \mathcal{A}_{H_1}(y,c_y) - \mathcal{A}_{H_0}(x,c_x)\nonumber\\
    &=\int_{\RR \times S^1}\omega \left(\partial_su, J(u)\partial_su\right) ds\wedge dt  +\int_{\RR \times S^1}(\partial_sH_s)(t,u)ds\wedge dt\\
    &\ge 0,\nonumber
\end{align}
where the last inequality follows from the compatibility of $J$ with $\omega$ and the monotonicity of the homotopy $H_s$. Moreover, the continuation map induces a map on cohomology
\begin{align*}
    c^{H_0,H_1}:HF(H_0)\lr HF(H_1).
\end{align*}
\subsubsection{Relative symplectic cohomology}
We recall the definition of relative symplectic cohomology introduced by Varolgunes in \cite{v}. Further details and applications can be found in \cite{a,a22,dgpz,msv,sun}.

A key ingredient in the construction of relative symplectic cohomology is the completion of the Floer complex. For a $\Lambda_{\geq0}$-module $A$, its completion is defined as follows. For $r' >r$, there exists a map 
\begin{align*}
    \Lambda_{\geq 0} / \Lambda_{\geq r'} \lr \Lambda_{\geq 0} / \Lambda_{\geq r} 
\end{align*}
and this map induces a map
\begin{align}\label{inverse}
    A \ton \Lambda_{\geq 0} / \Lambda_{\geq r'} \lr A \ton \Lambda_{\geq 0} / \Lambda_{\geq r}.
\end{align}
Then $\left\{ A \ton \Lambda_{\geq 0} / \Lambda_{\geq r} \right\}_{r>0}$ forms an inverse system with the maps given by \eqref{inverse}. The \textbf{completion} $\widehat{A}$ of $A$ is defined by
\begin{align*}
    \widehat{A} = \varprojlim_{r \to\infty} A \ton \Lambda_{\geq 0} / \Lambda_{\geq r}.
\end{align*}

Notice that the map \eqref{inverse} is surjective because the tensor product is a right-exact functor and the inverse system $\left\{ A \ton \Lambda_{\geq 0} / \Lambda_{\geq r} \right\}_{r>0}$ satisfies the Mittag--Leffler condition. Therefore, if the sequence
\begin{align*}
    0\lr A \ton  \Lambda_{\geq 0} / \Lambda_{\geq r} \lr B \ton  \Lambda_{\geq 0} / \Lambda_{\geq r} \lr C \ton  \Lambda_{\geq 0} / \Lambda_{\geq r}\lr 0
\end{align*}
is exact for each $r>0$, then 
\begin{align*}
    0 \lr \widehat{A} \lr \widehat{B} \lr  \widehat{C} \lr0
\end{align*}
is also exact by the Mittag--Leffler theorem for the inverse limit.

Let $(M, \omega)$ be a closed symplectic manifold and let $K \subset M$ be a compact subset. Choose a sequence of nondegenerate Hamiltonian functions $\{H_\ell : S^1 \times M \to \RR\}_{\ell=1,2,3,\cdots}$ such that $H_\ell|_{S^1\times K}$ is negative for all $\ell=1,2,3,\cdots$, $H_1\leq H_2 \leq H_3 \leq \cdots$, and 
\begin{align*}
    \lim_{\ell \to \infty} H_\ell(t,p) = \begin{cases}
        0&\textrm{if} \,\,p \in K\\ \infty&\textrm{if}\,\,p \notin K.
    \end{cases}
\end{align*}
This sequence induces a direct system
\begin{align*}
    CF(H_1) \lr CF(H_2) \lr CF(H_3) \lr\cdots
\end{align*}
where all the horizontal arrows are continuation maps. The \textbf{relative symplectic cohomology} $SH_M(K)$ of $K$ in $M$ is defined by
\begin{align}\label{reldef}
    SH^*_M(K) = H^* \left( \widehat{\varinjlim_{\ell\to\infty}} CF(H_\ell) \right)
\end{align}
where $\displaystyle\widehat{\varinjlim_{\ell\to\infty}} CF(H_\ell)$ denotes the completion of $\displaystyle\varinjlim_{\ell\to\infty} CF(H_\ell)$. Similarly, the relative symplectic cohomology with coefficients in $\Lambda$ is defined by
\begin{align*}
    SH^*_M(K;\Lambda) = H^* \left( \widehat{\varinjlim_{\ell\to\infty}} CF(H_\ell) \ton \Lambda\right) = H^* \left( \widehat{\varinjlim_{\ell\to\infty}} CF(H_\ell) \right) \ton \Lambda.
\end{align*}

\subsection{From dynamical systems} In this subsection, we recapitulate some facts from the theory of dynamical systems that we need throughout this paper. For a comprehensive exposition, see \cite{kh, th}.
\subsubsection{Topological entropy} Let $(X,d)$ be a compact metric space and let $f : X \lr X$ be a continuous map. Define an increasing sequence $\{d_\ell^f\}$ of metrics for $\ell=1,2,3,\cdots$ by
\begin{align}\label{metric}
    d_\ell^f(x,y) = \max_{0 \leq i\leq \ell} d(f^i(x),f^i(y))
\end{align}
where $f^i$ denotes the composition of $f$ with itself $i$ times. Let $ B_f(x,\epsilon,\ell)$ be the open ball of radius $\epsilon$ using the metric $d_\ell^f(x,y)$. A subset $E \subset X$ is said to be \textbf{$(\ell,\epsilon)$-spanning} if 
\begin{align*}
    X \subset \bigcup_{x \in E} B_f(x,\epsilon,\ell).
\end{align*}
Let $S_d(f,\epsilon,\ell)$ be the minimal cardinality of an $(\ell,\epsilon)$-spanning set. Then the \textbf{topological entropy} $h_{\textrm{top}}(f)$ of $f$ is defined by
\begin{align*}
   h_{\textrm{top}}(f) = \lim_{\epsilon \to 0} \limsup_{\ell \to \infty} \frac{1}{\ell} \log S_d(f,\epsilon,\ell).
\end{align*}

The definition of the topological entropy for a flow is completely analogous. Let $F=\{f^t\}_{t \in \RR }$ be a continuous flow on $X$. Define the family of metrics $\{ d_T^{F}\}_{T \in \RR}$ by
\begin{align*}
    d_T^{F} (x,y) = \max_{0\leq t \leq T} d(f^t(x),f^t(y)).
\end{align*}
The topological entropy $h_{\textrm{top}}(F) = h_{\textrm{top}}(f^t)$ of the flow $F$ is defined by
\begin{align*}
    h_{\textrm{top}}(F) = \lim_{\epsilon \to 0} \limsup_{T \to \infty} \frac{1}{T} \log S_d(F,\epsilon,T).
    \end{align*}
    where the quantity $S_d(F,\epsilon,T)$ is defined analogously as above. Note that the topological entropies of a map and a flow do not depend on the choice of the metric $d$. Moreover, the topological entropy of a flow agrees with that of its time-one map: $h_{\textrm{top}}(F) = h_{\textrm{top}}(f^1)$. 
\subsubsection{Hyperbolicity} In the study of dynamical systems, hyperbolic sets occupy a central position due to their rich structure and numerous remarkable properties. We begin by recalling the definition of hyperbolicity.
\begin{definition}
    Let $M$ be a smooth manifold and $F =\{f^t\}_{t \in \RR} $ be a smooth flow on $M$. An $F$-invariant set $V \subset M$ is said to be a \textbf{hyperbolic} set for $F$ if there exist a Riemannian metric on an open neighborhood $U$ of $V$ and constants $0<\lambda<1<\mu$ such that for each $x \in V$, the tangent space $T_xM$ admits a splitting $$T_xM = E_x^0 \oplus E_x^s \oplus E_x^u$$ with the following properties:
    
    \begin{itemize}
        \item $E_x^0$ is the one-dimensional subspace spanned by $\displaystyle\left.\frac{d}{dt}\right|_{t=0} f^t(x)$.\\
        \item $d f^t_x\left( E_x^{s} \right) = E_{f^t(x)}^{s}$ and $d f^t_x\left( E_x^{u} \right) = E_{f^t(x)}^{u}$ for all $t \in \RR$.\\
        \item $\| d f^t_x|_{E_x^s} \| \leq \lambda^t$ for $t>0$ and $\| d f_x^{-t}|_{E_x^u} \| \leq \mu^{-t}$ for $t>0$.\vspace{0.2 cm}
    \end{itemize}
\end{definition}
Hyperbolic sets possess many remarkable properties. Among them, we recall below one property that is relevant for our purposes. In particular, hyperbolic sets provide an alternative characterization of topological entropy. For a smooth flow $F=\{f^t\}_{t \in \RR}$, we say that a compact $F$-invariant set $V$ is \textbf{locally maximal} if there exists a neighborhood $U$ of $V$, called an \textbf{isolating neighborhood}, such that 
\begin{align*}
    V = \bigcap_{t \in \RR} f^t(U).
\end{align*}
In other words, the set $V$ is the maximal $F$-invariant subset of $U$, or equivalently, for every $x \in U\setminus V$, the image $\{f^t(x)\mid t \in \RR \}$ is not entirely contained in $U$.

We are now ready to present an alternative method for computing topological entropy.
\begin{theorem}\label{alttop}
    Let $F=\{f^t\}_{t \in \RR}$ be a smooth flow on a Riemannian manifold $M$ and $V \subset M$ be a locally maximal hyperbolic set of $F$. Then
    \begin{align*}
        h_{\textrm{top}}(f^t|_V) = \limsup_{\sigma \to \infty} \frac{1}{\sigma} \log p_F(\sigma)
    \end{align*}
    where $p_F(\sigma)$ is the number of periodic orbits of $F$ of period up to $\sigma$.
\end{theorem}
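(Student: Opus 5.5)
The statement is Bowen's classical theorem on hyperbolic flows; I would recover it from shadowing/closing and expansivity, arguing on $V$ itself. Throughout, $p_F(\sigma)$ is read as the number of periodic orbits of period at most $\sigma$ contained in $V$. If one instead counts all periodic orbits lying in a sufficiently small isolating neighborhood $U$, local maximality forces each such orbit, being a compact invariant set in $U$, to lie in $V$, so the two counts agree. The proof has two inequalities.

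\emph{Upper bound} $\limsup \frac1\sigma\log p_F(\sigma)\le h_{\textrm{top}}(f^t|_V)$. I would use expansivity of hyperbolic flows in Bowen--Walters form: there is $\epsilon_0>0$ such that if $d(f^t x, f^{s(t)}y)<\epsilon_0$ for all $t$, with $s$ a continuous reparametrization, $s(0)=0$, then $y=f^{\tau}x$ with $|\tau|$ small. Fix a small $\delta>0$ and pick one point on each periodic orbit of period at most $\sigma$. I claim that distinct orbits give points that are $(\sigma+1,\epsilon)$-separated for some $\epsilon\le\epsilon_0/2$ independent of $\sigma$. To see this, suppose two such points stay $\epsilon$-close up to time $\sigma+1$. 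Their periods are then close, so by periodicity they stay close, up to a reparametrization, for all time, and expansivity forces them onto the same orbit. Hence $p_F(\sigma)\le N(\epsilon,\sigma+1)$, the maximal cardinality of a separated set, and $N(\epsilon,T)\le S_d(F,\epsilon/2,T)$. Taking $\limsup$ and then $\epsilon\to0$ gives the bound.

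\emph{Lower bound}. First I reduce to a single basic set. By Smale--Bowen spectral decomposition, the nonwandering set of $F|_V$ is a finite union $\Omega_1\cup\dots\cup\Omega_k$ of transitive basic sets. The variational principle (entropy is carried by the nonwandering set) gives $h_{\textrm{top}}(f^t|_V)=\max_i h_{\textrm{top}}(f^t|_{\Omega_i})$. It therefore suffices to treat a transitive locally maximal hyperbolic set $\Omega$. Bowen's dichotomy then applies: either $F|_\Omega$ is topologically mixing, or it is a constant-roof suspension of a mixing hyperbolic homeomorphism (of an Axiom A map).

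\emph{Mixing case}. Here the flow has the specification property, which follows from the closing/shadowing lemma together with mixing. Take a maximal $(T,\epsilon)$-separated set $E\subset\Omega$. For each $x\in E$, specification produces a periodic orbit of period in $[T,T+L]$, with $L$ depending only on $\epsilon$, that $\epsilon/4$-shadows $x$ on $[0,T]$. Two points of $E$ shadowed by the same periodic orbit must correspond to time shifts of one another along that orbit, and separation limits how many such shifts can occur to at most $C(T+L)$. Hence
\begin{align*}
    p_F(T+L)\ \ge\ \frac{|E|}{C(T+L)}.
\end{align*}
Since the polynomial factor does not affect exponential growth, letting $T\to\infty$ and then $\epsilon\to0$ gives the inequality. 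In the suspension case I would apply the discrete analogue to the base map, using periodic points of an Axiom A homeomorphism and $h_{\textrm{top}}(f^n)=n\,h_{\textrm{top}}(f)$. The constant roof $c$ converts base period $n$ into flow period $nc$, and Abramov's formula relates the two entropies by the factor $1/c$.

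I expect the main obstacle to be the lower bound in the flow setting: controlling the time-reparametrization ambiguity in shadowing, so that distinct separated points really yield distinct periodic orbits up to polynomial multiplicity, and handling the non-mixing components. I would first try to cite Bowen's specification theorem for Axiom A flows directly, rather than reprove it.
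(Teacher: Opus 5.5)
The paper gives no proof of this statement. It quotes it from Katok--Hasselblatt and Fisher--Hasselblatt as Bowen's theorem, and your outline is the classical argument behind that citation: expansivity for the upper bound, and spectral decomposition, Bowen's mixing/constant-roof alternative and specification for the lower bound. Your reading of $p_F(\sigma)$ as counting orbits in $V$ matches how the paper uses it later. The lower-bound sketch is sound at this level of detail.

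The upper bound, however, has a step that fails as written. You claim that points chosen on distinct periodic orbits of period at most $\sigma$ are $(\sigma+1,\epsilon)$-separated, because two such points staying $\epsilon$-close up to time $\sigma+1$ must have close periods. That is false.

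Take the constant-roof-$1$ suspension of a horseshoe, coded by the full shift on $\{a,b\}$. The orbits of $(a^{n-1}b)^\infty$ and $(a^{n-2}b)^\infty$ are distinct, with periods $n$ and $n-1$. The points centred at a symbol $b$ agree on $2n-3$ consecutive symbols. After flowing back by time $n-2-m(\epsilon)$, they remain $\epsilon$-close for time $2n-4-2m(\epsilon)$, which exceeds $n+1$ once $n$ is large. One orbit simply winds once more around the fixed point $a^\infty$. The periods therefore differ by an amount of order one, so periodicity cannot propagate the closeness to all times, and expansivity is never triggered. The time-reparametrization difficulty you expected in the lower bound actually bites here.

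The repair is presumably what your unused $\delta$ was meant for.
\begin{itemize}
\item Take $\epsilon\le\epsilon_0/2$, and take $\delta>0$ so small that the flow moves points of $V$ by less than $\epsilon_0/2$ in time $\delta$.
\item Only compare orbits whose periods $T_x,T_y$ lie in a common window $(T-\delta,T]$. Suppose such points are $\epsilon$-close on $[0,T]$.
\item Then $s(t)=tT_y/T_x$ satisfies $|s(t)-t|<\delta$ on $[0,T_x]$, so $d(f^tx,f^{s(t)}y)<\epsilon_0$ there.
\item This closeness holds for all $t$, since $s(t+kT_x)=s(t)+kT_y$, and expansivity puts $x$ and $y$ on one orbit.
\end{itemize}
Each window thus contributes at most $N(\epsilon,T)$ orbits. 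Summing over the windows gives $p_F(\sigma)\le(\sigma/\delta+1)\,N(\epsilon,\sigma+\delta)$, and the polynomial factor does not affect the exponential rate.
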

\section{Relative symplectic cohomology as a persistence module}\label{sec:section3}
In this section, we focus on the relative symplectic cohomology in the case where the subset $K$ is a \textbf{Liouville domain} embedded in a closed symplectic manifold $(M,\omega)$: A subset $K$ of $M$ is called a compact domain with contact type boundary if it is a codimension zero submanifold of $M$ with boundary and there exists an outward pointing vector field $X$ defined on a neighborhood of $\partial K$ satisfying $\mathcal{L}_X \omega = \omega$; such a vector field $X$ is called a \textbf{Liouville vector field}. If the Liouville vector field $X$ is defined on $K$, then we call $K$ a \textbf{Liouville domain}. The boundary \(\partial K\) naturally carries a contact structure whose contact form $\alpha$ is given by
\begin{align*}
    \alpha = \iota_X \omega|_{\partial K}.
\end{align*}
We denote by $R_\alpha$ the associated Reeb vector field and by $\varphi_\alpha^t$ the corresponding Reeb flow on the contact manifold $(\partial K, \alpha)$. Using the flow of the Liouville vector field, a neighborhood of $\partial K$ in $M$ may be identified with a symplectic collar
$$\partial K \times [1,1+3r_1]$$
for some $r_1>0$. For future purposes, we choose $r_1>0$ sufficiently small so that the larger region $\partial K \times [1-r_1,1+4r_1]$ is still embedded in $M$. On this collar, the symplectic form is given by $d (r \alpha)$ where $\alpha $ is the contact form on $\partial K$ and $r$ is the coordinate on $[1,1+3r_1]$.

Throughout the paper, we pick a generic almost complex structure $J$ which is compatible with $\omega$. Furthermore, in the case where $K \subset M$ is a compact domain with contact type boundary, we may choose  $J$ to be \textbf{cylindrical}. Namely,
\begin{align*}
        J \displaystyle( r\partial_r )= R_\alpha \,\,\text{and $J$ preserves}\,\, \xi
    \end{align*}
where $\partial_r = \frac{\partial}{\partial r}$ and $\xi=\text{Ker}(\alpha)$.
\subsection{Semi-admissible Hamiltonian functions} In the case where the subset $K$ is compact domain with contact type boundary, we can make use of a useful class of Hamiltonian functions to define $SH_M(K)$. We say that a Hamiltonian function $H : S^1 \times M \to \RR$ is \textbf{$K$-semi-admissible} if it satisfies the following.\vspace{0.2 cm}
\begin{enumerate}[label=(A\arabic*)]
    \item $H=0$ on $S^1 \times K$.\vspace{0.2 cm}
    \item $H(t, p, r) =  h(r)$ on $S^1 \times \left(\partial K \times [1, 1+3r_1]\right)$ such that\vspace{0.3 cm}
    \begin{itemize}
        \item $ h(r)=h_1(r)$ on $S^1 \times \left(\partial K \times [1, 1+r_1]\right)$ for some strictly convex and increasing function $h_1$,\vspace{0.2 cm}
        \item $h(r) = s_Hr - i_H$ on $S^1 \times (\partial K \times [1+r_1, 1+2r_1])$ where the slope $s_H$ of $H$ satisfies that $s_H \notin \text{Spec}(\partial K, \alpha)$ and $i_H \in \RR$. Here, $\text{Spec}(\partial K, \alpha)$ denotes the set of all periods of contractible Reeb orbits of $(\partial K, \alpha)$, and\vspace{0.2 cm}
        \item $h(r) = h_2(r)$ on $S^1 \times \left(\partial K \times [1+2r_1,1+3r_1]\right)$ for some strictly concave and increasing function $h_2$.\vspace{0.2 cm}
    \end{itemize}
        \item $H= c_H$ for some constant $c_H >0$ on $S^1 \times \left(M - (K \cup \left(\partial K \times [1,1+3r_1]\right)\right))$.\vspace{0.2 cm}
\end{enumerate}

For a $K$-semi-admissible Hamiltonian function $H$ given by $h(r)$ on $S^1 \times( {\partial K} \times[1, 1+3r_1])$, the Hamiltonian vector field $X_H$ is given by
\begin{align*}
  X_H(p, r) = -h'(r) R_\alpha(p)  
\end{align*}
and hence, every $T$-periodic Reeb orbit $\gamma$ of $(\partial K, \alpha)$ corresponds to a 1-periodic Hamiltonian orbit $x = (\gamma, r_*)$ of $H$ with $h'(r_*) = T$ and they travel in the opposite direction. This implies that
\begin{align}\label{peri}
    \int_{S^1}  \gamma^*\alpha = - \mathcal{A}(\gamma)
\end{align}
where $\mathcal{A}(\gamma)$ denotes the period of $\gamma$.

For a $K$-semi-admissible Hamiltonian function $H$, the Floer cohomology $HF(H)$ of $H$ is defined as the Floer cohomology $HF(\widetilde{H})$ of a $C^2$-small nondegenerate perturbation $\widetilde{H}$ of $H$ leaving the linear part unchanged. 

It was shown in \cite{a2} that, in this setting, the relative symplectic cohomology can be described using \(K\)-semi-admissible Hamiltonians. More precisely, if \(H\) is a \(K\)-semi-admissible Hamiltonian function, then
\begin{align}\label{ivo}
    SH^*_M(K)\cong H^*\left(\widehat{\varinjlim_{\sigma\to\infty}}CF(\sigma H)\right).
\end{align}

Since a $K$-semi-admissible Hamiltonian function $H$ is constant outside $\partial K \times [1,1+3r_1]$, we have a positive lower bound for the topological energy of Floer trajectories of $H$ in certain special situations. More precisely, we have the following proposition.

\begin{proposition}\label{mon}
    Let $H_0$ and $H_1$ be $K$-semi-admissible Hamiltonian functions with $H_0 \le H_1$, and let $H_s$ be a monotone homotopy from $H_0$ to $H_1$ through $K$-semi-admissible Hamiltonian functions. Then there exists $\eta_1 >0$ such that 
    \begin{align*}
        E_{\textrm{top}}(u) > \eta_1
    \end{align*}
    for every Floer trajectory $u : \RR \times S^1 \lr M$ of $H_s$ with $\textrm{Im}(u) \not\subset \partial K \times[1-r_1,1+4r_1]$.
\end{proposition}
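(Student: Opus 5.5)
The plan is a monotonicity-lemma argument for $J$-holomorphic curves. The key point is that $H_s$ is constant in $s$-dependent ways only inside the collar $\partial K \times [1,1+3r_1]$. On $K$ it vanishes, and on $M - (K\cup \partial K\times[1,1+3r_1])$ it is constant in $p$ for each $s$. So on the two regions $\partial K\times[1-r_1,1)$ and $\partial K\times(1+3r_1,1+4r_1]$ we have $X_{H_s}=0$, and the Floer equation \eqref{fee} reduces to the Cauchy--Riemann equation $\partial_s u + J\partial_t u=0$. Hence any portion of $u$ lying there is a genuine $J$-holomorphic curve.

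Suppose $\textrm{Im}(u)\not\subset \partial K\times[1-r_1,1+4r_1]$. All periodic orbits of $H_0,H_1$ (after the $C^2$-small perturbation, which we take supported away from these two shells) either lie in $K$, lie in the collar at levels where $h'$ hits a Reeb period, or lie in the outer constant region. In each case we compare the asymptotic ends with the escaping point.

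\textbf{Case 1: one end is inside the collar.} Then $u$ must cross one of the shells $S_-=\partial K\times[1-r_1,1-r_1/2]$ or $S_+=\partial K\times[1+3r_1+r_1/2,1+4r_1]$ entirely, passing from one boundary component to the other. Choose a point $z$ with $u(z)$ on the middle hypersurface of that shell. The connected component of $u^{-1}$(shell) containing $z$ is a $J$-holomorphic curve through $u(z)$, with boundary mapping to $\partial(\text{shell})$ at distance at least $\delta=\delta(r_1)>0$ from $u(z)$. The monotonicity lemma for $J$-holomorphic curves (with $J$ fixed and $M$ compact) then gives $\int \omega(\partial_s u,J\partial_s u)\ge C\delta^2=:2\eta_1$ on that piece.

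\textbf{Case 2: both ends lie in the same region outside the collar.} Here a curve with both ends in $K$ (or both in the outer region) still has image reaching beyond $\partial K\times[1-r_1,1+4r_1]$ only if it enters a shell and exits past it. Again it crosses a shell, so the same argument applies. If the image never meets the collar at all, we need a different condition. Since the orbits there are constants (or critical points of a small Morse perturbation), the only such trajectories are Morse-type ones. The hypothesis $\textrm{Im}(u)\not\subset\ldots$ is then used by saying the statement concerns curves touching the collar from outside, and the perturbation is taken small enough that its energy does not reduce below $\eta_1$. I will treat this carefully: the conclusion is needed only for curves meeting the collar region. I expect to restrict to that case, noting that trajectories entirely outside the collar are handled by the perturbation being $C^2$-small.

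Finally, by \eqref{acc} the monotonicity term $\int(\partial_s H_s)(t,u)\ge0$ is nonnegative, so $E_{\text{top}}(u)\ge \int\omega(\partial_su,J\partial_su)\ge 2\eta_1>\eta_1$. The constant $\eta_1$ depends only on $J$, $r_1$ and $(M,\omega)$, not on $u$ or on the particular homotopy.

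The main obstacle is the applicability of the monotonicity lemma. The piece of $u$ in the shell must be a proper $J$-holomorphic map with boundary on the shell boundary, and this requires $X_{H_s}$ to vanish there even after the nondegenerate perturbation. I would first arrange that the perturbation $\widetilde H$ is supported near the orbits, hence away from $S_\pm$. If that fails, I would instead use the Gromov trick, graphing $u$ to make it holomorphic for an $s$-dependent almost complex structure on $\RR\times S^1\times M$, and apply monotonicity there with uniform constants.
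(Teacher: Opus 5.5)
This is essentially the paper's proof: where $X_{H_s}=0$ (on $\partial K\times[1-r_1,1)$ and $\partial K\times(1+3r_1,1+4r_1]$) the trajectory is $J$-holomorphic. So the monotonicity lemma, applied at a point of $\mathrm{Im}(u)$ on an intermediate level, gives $\int u^*\omega\ge c\,r_0^2$ with $c,r_0$ depending only on $(M,\omega,J)$ and $r_1$, and \eqref{acc} with $\partial_s H_s\ge 0$ transfers this bound to $E_{\mathrm{top}}(u)$. Your caveat is correct and applies equally to the paper: its choice of $p$ on the level $r=1-\frac12 r_1$ or $r=1+\frac72 r_1$ tacitly assumes $\mathrm{Im}(u)$ meets $\partial K\times[1-\frac12 r_1,1+\frac72 r_1]$ (a constant cylinder deep in $K$ has $E_{\mathrm{top}}=0$), so such curves must be excluded rather than ``handled by the perturbation being $C^2$-small,'' and your first perturbation fix cannot work because every point of the shells is a degenerate constant orbit of $H_s$, leaving your Gromov-trick fallback (the paper simply works with the unperturbed $H_s$).
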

\begin{proof}
Recall the monotonicity theorem for holomorphic curves: there exist constants $c>0$ and $r_0>0$, depending only on $(M,\omega,J)$, such that for any holomorphic curve $u : (\Sigma,j) \lr (M,\omega,J)$ passing through the point $p$,
    \begin{align*}
        \int_{\Sigma_r} u^*\omega \geq cr^2
    \end{align*}
    where $\Sigma_r = u^{-1}(B_{r}(p))$ for $0<r\le r_0$. Here, $B_r(p)$ is a closed ball in $M$ of radius $r$ centered at $p$. See, for example, \cite{ms}. We may assume that $r_0 < \frac{1}{2}r_1$ Since the image of $u$ is not entirely contained in $\partial K \times[1-r_1,1+4r_1]$, we can choose a point $p \in \textrm{Im}(u)$ such that 
    \begin{align*}
        p \in \partial K \times \left\{1-\frac{1}{2}r_1,1+\frac{7}{2}r_1 \right\}.
    \end{align*}
    Since $H_s$ is constant outside $\partial K \times [1,1+3r_1] $, the restriction $u: \Sigma_{r_0} \lr M$ is a $J$-holomorphic curve where $\Sigma_{r_0} = u^{-1}(B_{r_0}(p))$. Then
    \begin{align*}
        E_{\textrm{top}}(u)& \geq \int_{\Sigma_{r_0}} \omega\left(\partial_s u, J(u)\partial_s u\right) ds\wedge dt&&\textrm{by \eqref{acc}}\\& = \int_{\Sigma_{r_0}} \omega\left(\partial_s u, \partial_t u\right) ds\wedge dt &&\textrm{since $u$ is $J$-holomorphic}\\& = \int_{\Sigma_{r_0}} u^* \omega\\
        &\ge cr_0^2.
    \end{align*}
    Taking $\eta_1 = cr_0^2$, the proof is complete.\\
\end{proof}

On the other hand, when the image of a Floer trajectory is entirely contained in $\partial K \times [1-r_1,1+4r_1]$, we have the following observation.
\begin{proposition}\label{pe}
Let $H_0$ and $H_1$ be $K$-semi-admissible Hamiltonian functions with $H_0 \le H_1$, and let $H_s$ be a monotone homotopy from $H_0$ to $H_1$ through $K$-semi-admissible Hamiltonian functions. Let $u:\RR \times S^1 \lr M$ be a Floer trajectory of $H_s$ connecting $x=(\gamma_x,r_x)$ to $y=(\gamma_y,r_y)$. If $\textrm{Im}(u) \subset \partial K \times [1-r_1,1+4r_1]$, then
     \begin{align*}
         \mathcal{A}(\gamma_x) \geq \mathcal{A}(\gamma_y).
     \end{align*}
     Moreover, the equality holds if and only if the corresponding Reeb orbits $\gamma_x$ and $\gamma_y$ coincide.
\end{proposition}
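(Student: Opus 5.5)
Since $\textrm{Im}(u)$ lies in the symplectization collar $\partial K \times [1-r_1,1+4r_1]$, where $\omega = d(r\alpha)$ is exact, the plan is to compute the topological energy through a primitive and compare it with the Reeb periods. Orbits of $H_s$ in the collar have the form $x=(\gamma_x,r_x)$ with $h_0'(r_x)=\mathcal{A}(\gamma_x)$, and similarly for $y$ with $h_1$. Using the cylindrical almost complex structure $J$ with $J(r\partial_r)=R_\alpha$, $J\xi=\xi$, I would project $u=(v,\rho)$ to $\partial K$ and $r$-components. The quantity to track is the function
\begin{align*}
    \Phi(s) = \int_{S^1} v(s,\cdot)^*\alpha,
\end{align*}
the $\alpha$-length of the loop $v(s,\cdot)$. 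By \eqref{peri} it tends to $-\mathcal{A}(\gamma_x)$ as $s\to-\infty$ and to $-\mathcal{A}(\gamma_y)$ as $s\to+\infty$. The claim $\mathcal{A}(\gamma_x)\ge\mathcal{A}(\gamma_y)$ is therefore equivalent to $\Phi(+\infty)-\Phi(-\infty)\ge 0$.

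By Stokes,
\begin{align*}
    \Phi(+\infty)-\Phi(-\infty)=\int_{\RR\times S^1} v^*d\alpha .
\end{align*}
The integrand is the key object. I would write the Floer equation \eqref{fee} in components. The $\xi$-part says $\pi_\xi\partial_s v + J\pi_\xi\partial_t v=0$. This is the usual pseudo-holomorphic condition on the contact-plane part, since $X_{H_s}$ is parallel to $R_\alpha$ and has no $\xi$-component. It gives
\begin{align*}
    v^*d\alpha = d\alpha(\pi_\xi\partial_s v,\pi_\xi\partial_t v)\,ds\wedge dt = |\pi_\xi\partial_s v|^2\, ds\wedge dt \ge 0,
\end{align*}
because $d\alpha$ restricted to $\xi$ is compatible with $J|_\xi$. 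The Reeb and $r$-directions do not contribute to $d\alpha$, as $\iota_{R_\alpha}d\alpha=0$. Hence $\mathcal{A}(\gamma_x)-\mathcal{A}(\gamma_y)=\int v^*d\alpha\ge0$.

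For the equality case, equality forces $\pi_\xi\partial_s v\equiv0$, and by the $\xi$-equation also $\pi_\xi\partial_t v\equiv 0$. So $v$ is everywhere tangent to $R_\alpha$, i.e.\ $v(s,t)=\varphi_\alpha^{f(s,t)}(p)$, and its image is a single Reeb trajectory. Since the asymptotic loops are closed Reeb orbits traced by $v(\pm\infty,\cdot)$, they are the same orbit $\gamma_x=\gamma_y$ up to reparametrization and time shift, with equal periods. The converse is immediate, since equal orbits have equal periods.

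The main obstacle is being careful that the cylindrical splitting is valid throughout $\partial K\times[1-r_1,1+4r_1]$. The paper only sets $J$ cylindrical on the collar, so I would choose $J$ cylindrical on this slightly larger region and note that $H_s$ depends only on $r$ there; in the regions where $H_s$ is constant, the equation is just holomorphic and the same computation applies. I would also check the sign conventions: $X_H=-h'R_\alpha$ gives the orientation reversal in \eqref{peri}, which is exactly what turns the nonnegativity of $\int v^*d\alpha$ into the inequality in the stated direction.
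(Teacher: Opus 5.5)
Your proposal is correct and follows essentially the same route as the paper. You split $u=(v,r)$ using the cylindrical $J$, apply Stokes together with \eqref{peri} to get $\mathcal{A}(\gamma_x)-\mathcal{A}(\gamma_y)=\int v^*d\alpha$, and use the $\xi$-component of the Floer equation to write the integrand as $d\alpha(\pi_\xi\partial_s v, J\pi_\xi\partial_s v)\ge 0$, with equality forcing $\pi_\xi\partial_s v\equiv 0$. Your additional remark that then $\pi_\xi\partial_t v\equiv 0$ as well, so both derivatives of $v$ are Reeb-parallel and the image lies in a single Reeb orbit, makes explicit a step the paper leaves implicit.
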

\begin{proof}
    We decompose $u(s,t)=(v(s,t),r(s,t)) \in \partial K \times [1-r_1,1+4r_1]$ for all $(s,t)\in\RR \times S^1$. The Floer equation \eqref{fee} can be written as 
\begin{align*}
    \pi_{\xi}\left(\partial_sv \right) + \alpha\left( \partial_s v \right) R_\alpha + (\partial_s r) \partial_r +J \left\{ \pi_{\xi}\left(\partial_t v \right) + \alpha\left( \partial_t v \right) R_\alpha + (\partial_t r) \partial_r + h_s'(r)R_\alpha\right\} = 0
\end{align*}
where $\pi_\xi : T(\partial K) \lr \xi$ is the projection. With the cylindrical almost complex structure $J$, the equation is written as follows:
    
    \begin{align}\label{reeb1}
        \begin{cases}
            \displaystyle\pi_\xi\left(\partial_s v \right) + J(v)\pi_\xi\left(\partial_t v\right) =0\vspace{0.2cm}\\
            \displaystyle \partial_s r -r \alpha\left(\partial_t v \right) - r h_s'(r) =0\vspace{0.2cm}\\
            \displaystyle \alpha\left(\partial_s v\right) + \frac{\partial_t r}{r} =0.
        \end{cases}
        \end{align}
        Then 
        \begin{align*}
            -\mathcal{A}(\gamma_y) + \mathcal{A}(\gamma_x) &= \int_{\RR \times S^1} v^* d\alpha &&\textrm{by Stokes' theorem and \eqref{peri}}\\ &= \int_{\RR \times S^1} d\alpha\left( \partial_s v,\partial_t v \right) ds \wedge dt\\
            &= \int_{\RR \times S^1} d\alpha\left( \pi_\xi(\partial_s v),\pi_\xi(\partial_t v) \right) ds \wedge dt\\
            &= \int_{\RR \times S^1} d\alpha\left( \pi_\xi(\partial_s v),J(v)\pi_\xi(\partial_s v) \right) ds \wedge dt &&\textrm{by the first equation of \eqref{reeb1}}\\
            &\geq 0
        \end{align*}
        where the last inequality follows from the compatibility of $J$. 
        
        If $\mathcal{A}(\gamma_x) = \mathcal{A}(\gamma_y)$, then the calculation above implies that $\pi_\xi(\partial_sv) = 0$. Hence, we have 
        \begin{align*}
            \partial_sv = \alpha(\partial_sv) R_\alpha,
        \end{align*}
        and then the image of $v$ is contained in the image of a single Reeb orbit. Consequently, $\gamma_x = \gamma_y$. This completes the proof.\\ 
\end{proof}

 \begin{remark}
     Proposition \ref{pe} holds as long as the Floer trajectory $u$ admits the decomposition $u=(v,r)$ for all $(s,t) \in \RR \times S^1$. 
 \end{remark}

\subsection{Action filtrations} For the rest of the paper, we assume that $(M,\omega)$ is a closed symplectic manifold with $\omega|_{\pi_2(M)}=0$, and that $K \subset M$ is a Liouville domain with \textbf{incompressible} boundary, meaning that the map $\pi_1(\partial K) \lr \pi_1(M)$ induced by inclusion is injective. Then the action functional associated to a Hamiltonian function $H$, defined in \eqref{act}, is independent of the choice of disk capping of a $1$-periodic orbit of $H$. Furthermore, the incompressibility of $\partial K$ ensures that a loop in $K$ is contractible in $K$ if and only if it is contractible in $M$. Hence, the action $\mathcal{A}_H(x)$ of a 1-periodic orbit $x=(\gamma,r_*)$ of a $K$-semi-admissible Hamiltonian $H$ is computed as
\begin{align*}
    \mathcal{A}_H(x) = A_h(r_*)
\end{align*}
where $A_h : [1, 1+r_1] \cup[1+2r_1, 1+3r_1]  \to \RR$ is defined by
\begin{align*}
    A_h(r) =
    \begin{cases}
        A_{h_1}(r) = -r h_1'(r) + h_1(r)&\textrm{if}\,\,r \in [1,1+r_1]\\
        A_{h_2}(r) = -r h_2'(r) + h_2(r)&\textrm{if}\,\,r \in [1+2r_1,1+3r_1].
    \end{cases}
    \end{align*}
Observe that 
\begin{align*}
    \frac{d}{dr}\left(-r h'(r) +h(r)\right) = -h'(r) - rh''(r) +h'(r) = - rh''(r).  
\end{align*}
This implies that the function $ A_h(r)$ is decreasing on $[1,1+r_1]$ by the convexity of $h_1$ and is increasing on $[1+2r_1,1+3r_1]$ by the concavity of $h_2$. Therefore, 
\begin{align}\label{ima}
    -i_H \le A_{h_1}(r) \le 0\,\,\textrm{and}\,\,-i_H \le A_{h_2}(r) \le c_H.
\end{align}

For any $K$-semi-admissible Hamiltonian function $H$ and $\tau \in \RR$, let $CF^{>-\tau}(H)$ denote the submodule of $CF(H)$ generated by the $1$-periodic orbits of $H$ whose action is greater than $-\tau$.
This is in fact a subcomplex of $CF(H)$ because, as observed in \eqref{acf}, the action increases along Floer trajectories. Hence, we can define the filtered Floer cohomology $$HF^{>-\tau}(H) = H(CF^{>-\tau}(H),d)$$ of $H$ for any $\tau \in \RR$.

Moreover, for two $K$-semi-admissible Hamiltonian functions $H_0$ and $H_1$ with $H_0 \leq H_1$, the continuation map $c^{H_0,H_1}$ respects the action filtration by \eqref{acc}. Therefore, for every $\tau\in\RR$, there is an induced chain map $$ c^{H_0,H_1} : CF^{>-\tau}(H_0) \lr CF^{>-\tau}(H_1),$$ which in turn induces a map on cohomology $$  c^{H_0,H_1} : HF^{>-\tau}(H_0) \lr HF^{>-\tau}(H_1).$$

In view of \eqref{ivo}, the action filtration of $SH_M(K)$ is given by 
\begin{align*}
    SH^{>-\tau}_M(K) = H \left( \widehat{\varinjlim_{\sigma \to \infty}}CF^{>-\tau}(\sigma H)  \right)
\end{align*}
for any $K$-semi-admissible Hamiltonian function $H$ and for $\tau \in \RR$. See \cite{a2}. The goal of this subsection is to simplify the definition of the filtered relative symplectic cohomology. More precisely, we will represent the action filtration $SH^{>-\tau}_M(K)$ by the Floer cohomology of a single Hamiltonian function within a suitable action window. To proceed further, we introduce some terminology. For a $K$-semi-admissible Hamiltonian function $H$, the 1-periodic Hamiltonian orbits of $H$ fall into the following four types:\vspace{0.2cm}
\begin{itemize}
\item constant Hamiltonian orbits of $H $ contained in $K$,\vspace{0.2cm}
\item nonconstant Hamiltonian orbits arising from the convex part of $H$,\vspace{0.2cm}
\item nonconstant Hamiltonian orbits arising from the concave part of $H$, and\vspace{0.2cm}
\item constant Hamiltonian orbits of $H$ outside $K$.\vspace{0.2cm}
\end{itemize}
We refer to the first two types as \textit{lower orbits} and the last two types as \textit{upper orbits}. Let $x$ be a lower orbit of $H$. If $x$ is a constant orbit, then its action $\mathcal{A}_H(x)=0$. If $x = (\gamma,r_*)$ is a nonconstant orbit, then
\begin{align*}
    \mathcal{A}_H(x) &= -r_* h'(r_*) +h(r_*)\\
    &=  -r_* h'(r_*) +\int_{1}^{r_*} h'(r)dr &&h(1)=0\\
    &\leq -r_* h'(r_*) +\int_{1}^{r_*} h'(r_*)dr&&\textrm{$h$ is convex on $(1,1+r_1)$}\\
    &= - h'(r_*)\\
    &= - \mathcal{A}(\gamma).
\end{align*}
Hence, every lower orbit has nonpositive action. Furthermore, if a lower orbit $x = (\gamma,r_*)$ satisfies $\mathcal{A}_H(x) >-\tau$, then the previous estimate implies that
\begin{align*}
    \mathcal{A}(\gamma) = h'(r_*) < \tau.
\end{align*}
%Now, let  $x$ be an upper orbit of $H$. If $x$ is a constant orbit, then $\mathcal{A}_H(x) = c_H$. If $x = (\gamma,r_*)$ is a nonconstant orbit, then
%\begin{align*}
%      \mathcal{A}_H(x) &= -r_* h'(r_*) +h(r_*)\\
%    &=  -r_* h'(r_*) +c_H- \int_{r_*}^{1+3r_1} h'(r)dr &&h(1+3r_1)=c_H\\
%    &\ge -r_* h'(r_*) +c_H- \int_{r_*}^{1+3r_1} h'(r_*)dr &&\textrm{$h$ is concave on $(1+2r_1,1+3r_1)$}\\
%    &= -r_* h'(r_*) +c_H - (1+3r_1 -r_*)h'(r_*)\\
%    &= c_H - (1+3r_1)h'(r_*)\\
%    &= c_H - (1+3r_1)\mathcal{A}(\gamma)
%\end{align*}
\begin{proposition}\label{nof}
    Let $H_0$ and $H_1$ be $K$-semi-admissible Hamiltonian functions with $H_0 \le H_1$, and let $H_s$ be a monotone homotopy from $H_0$ to $H_1$ through $K$-semi-admissible Hamiltonian functions. If 
\begin{align}\label{princ}
 \partial_sh'_s (r)\geq 0 \,\,\textrm{for}\,\,r\in(1,1+3r_1),   
\end{align}
 then there exists no Floer trajectory of $H_s$ asymptotic to an upper orbit of $H_0$ at $-\infty$ and asymptotic to a lower orbit of $H_1$ at $+\infty$.
\end{proposition}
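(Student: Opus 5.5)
We argue by contradiction. Suppose $u$ is a Floer trajectory of $H_s$ with $u(-\infty)=x$ an upper orbit of $H_0$ and $u(+\infty)=y$ a lower orbit of $H_1$. The plan is to separate the region containing the lower orbits from the region containing the upper orbits by a hypersurface $\partial K\times\{r_0\}$ inside the linear region $r\in(1+r_1,1+2r_1)$. We then show that a Floer trajectory cannot cross this hypersurface in the direction from large $r$ to small $r$. This is a maximum-principle / no-escape argument in the spirit of Abouzaid--Seidel, adapted to the continuation setting. Hypothesis \eqref{princ} is exactly what makes the relevant sign work.

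\textbf{Step 1: reduce to a compact piece of $u$ inside the region of small $r$.} Lower orbits lie in $K\cup(\partial K\times[1,1+r_1])$, and upper orbits lie in $\partial K\times[1+2r_1,1+3r_1]$ or outside the collar. Fix a regular value $r_0\in(1+r_1,1+2r_1)$ of $r\circ u$ on the part of $u$ mapping into the collar. Set
\[
S=u^{-1}\bigl(K\cup(\partial K\times[1,r_0])\bigr).
\]
Then $S$ contains a neighborhood of $s=+\infty$ and is disjoint from a neighborhood of $s=-\infty$, so $S$ is a compact surface with boundary plus an end at $+\infty$. Its boundary $\partial S$ is mapped to $\{r=r_0\}$, and it is nonempty since it must separate the two ends.

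\textbf{Step 2: energy identity on $S$.} We compute the geometric energy
\[
E(u|_S)=\int_S|\partial_s u|^2
\]
via Stokes' theorem, exactly as in \eqref{acc}, but with boundary terms on $\partial S$. On $S$ the primitive $\lambda=r\alpha$ extends over $K$ as $\iota_X\omega$, and $H_s$ vanishes on $K$. We use
\[
u^*\omega-d(H_s(u))\wedge dt = \text{energy density} - (\partial_s H_s)\,ds\wedge dt.
\]
Integrating over $S$ gives
\[
E(u|_S)=-\mathcal A_{H_1}(y)+\int_{\partial S}\bigl(u^*\lambda - H_s(u)\,dt\bigr)-\int_S \partial_s H_s(u).
\]
The signs are to be checked with the orientation conventions of \eqref{acf}. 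Using the convexity bound on lower orbits (action $\le -\mathcal A(\gamma)$, or equal to $0$ for constant orbits), together with the fact that on $\partial S$ we have $h_s(r_0)=s_{H_s}r_0-i_{H_s}$, the boundary integral becomes
\[
\int_{\partial S}\bigl(r_0\,\alpha(\partial u)-h_s(r_0)\,dt\bigr).
\]
This is then rewritten through the Floer equation \eqref{reeb1}. There, $r_0\alpha(\partial_t v)=\partial_s r-r_0h_s'(r_0)$ along the boundary, with the normal derivative of $r$ having a sign fixed by the fact that $S$ is where $r\le r_0$.

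\textbf{Step 3: sign analysis.} This is the main obstacle. We want the total right-hand side to be $\le 0$, which would force $E(u|_S)=0$. That makes $u|_S$ a Hamiltonian orbit, contradicting $\partial S\neq\emptyset$ with $y$ in $r<r_0$. The key terms are of the form
\[
\int_{\partial S}(r_0h_s'(r_0)-h_s(r_0))\,dt
\]
and the $s$-dependence of these terms. Write $a(s)=r_0h_s'(r_0)-h_s(r_0)$. Integrating $a(s)\,dt$ over $\partial S$ gives, by Stokes, $\int_S a'(s)\,ds\wedge dt$. This combines with $-\int_S\partial_sH_s$ to produce
\[
\int_S\Bigl(r_0\,\partial_s h_s'(r_0)-\partial_s h_s(r_0)-\partial_s H_s(u)\Bigr).
\]
On $S$ the Hamiltonian $H_s(u)$ is $h_s(r(u))$ with $r\le r_0$. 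Since $\partial_s h_s'\ge 0$ and $\partial_s h_s(1)=0$, we have
\[
\partial_s h_s(r_0)-\partial_s h_s(r)=\int_r^{r_0}\partial_s h_s'\ge 0,
\]
and $\partial_s h_s(r_0)\le r_0\,\partial_s h_s'(r_0)$ as well. Here \eqref{princ} is used to fix these signs. The $\partial_s r$ boundary term has the favorable sign because $r\le r_0$ inside $S$. I expect the delicate bookkeeping, namely orientations, the $t$-component of $\partial S$, and the convention that Reeb orbits run opposite to $X_H$ from \eqref{peri}, to be the real work. If the direct computation stalls, I would instead first try the pure maximum principle for $\rho=r\circ u$ on $\{r>1\}$, using \eqref{reeb1} to derive an elliptic inequality, and then fall back to the Stokes argument above.
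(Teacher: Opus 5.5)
\textbf{There is a genuine gap: the energy estimate in Step 3 is placed on the wrong side of the hypersurface $\{r=r_0\}$, so it cannot produce a contradiction.}

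Along $\partial S$, with $S=u^{-1}\bigl(K\cup\partial K\times[1,r_0]\bigr)$, the outward normal $n$ points into $\{r>r_0\}$. The boundary term coming from $r_0\,\alpha\circ J=dr$ is therefore $\int_{\partial S}\partial_n(r\circ u)\,d\ell\ge 0$. That is the unfavorable sign, not the favorable one you claim.

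There are two further bookkeeping errors.
\begin{itemize}
\item $S$ is not compact. When you convert $\int_{\partial S}a(s)\,dt$ by Stokes, you also pick up the circle at $s=+\infty$. It contributes $a(+\infty)=r_0s_{H_1}-(s_{H_1}r_0-i_{H_1})=i_{H_1}$.
\item The end term is $+\mathcal{A}_{H_1}(y)$, not $-\mathcal{A}_{H_1}(y)$.
\end{itemize}
Done carefully, the identity is
\[
E(u|_S)=\bigl(\mathcal{A}_{H_1}(y)+i_{H_1}\bigr)+\int_{\partial S}\partial_n(r\circ u)\,d\ell+\int_S\bigl(\partial_sh_s(r_0)-r_0\,\partial_sh_s'(r_0)-\partial_sH_s(u)\bigr)\,ds\,dt .
\]
\begin{itemize}
\item The first term is strictly positive for every lower orbit, by \eqref{ima} and strict convexity.
\item The second term is nonnegative.
\item The third term has no definite sign under \eqref{princ}. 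Your inequality $\partial_sh_s(r_0)\le r_0\,\partial_sh_s'(r_0)$ does not follow from \eqref{princ}, and in your own expression it would push the wrong way anyway.
\end{itemize}
For $s$-independent $H$ the third term vanishes and the right-hand side is strictly positive. That is exactly the case needed to make $CF_U(H)$ a subcomplex, so the inner piece yields nothing: the identity is fully compatible with such trajectories existing. Your fallback, a maximum principle for $r\circ u$, does not rescue this. It only bounds $r\circ u$ from above, i.e. it forbids escaping outward, which is not the direction at issue.

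\textbf{How to fix it.} Run the estimate on the complementary piece $S'$, the closure of $(\RR\times S^1)\setminus S$, which contains the end at the upper orbit $x$. The Liouville primitive does not extend over $M\setminus(K\cup\partial K\times[1,1+3r_1])$. So obtain the identity for $S'$ by subtracting the one above from the global identity $E(u)=\mathcal{A}_{H_1}(y)-\mathcal{A}_{H_0}(x)-\int\partial_sH_s(u)$, using $\int_{\RR\times S^1}\partial_s\bigl(h_s(r_0)-r_0h_s'(r_0)\bigr)\,ds\,dt=i_{H_0}-i_{H_1}$. The global identity is where $\omega|_{\pi_2(M)}=0$ and incompressibility enter. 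You get
\[
E(u|_{S'})=-\bigl(\mathcal{A}_{H_0}(x)+i_{H_0}\bigr)-\int_{\partial S}\partial_n(r\circ u)\,d\ell+\int_{S'}\bigl(\partial_sh_s(r_0)-r_0\,\partial_sh_s'(r_0)-\partial_sH_s(u)\bigr)\,ds\,dt .
\]
Now every term has the right sign.
\begin{itemize}
\item $\mathcal{A}_{H_0}(x)>-i_{H_0}$ for every upper orbit. Concave ones satisfy this by \eqref{ima} and strict concavity; constant ones because $c_{H_0}>0$ and $i_{H_0}\ge s_{H_0}>0$.
\item The middle term is $\le 0$.
\item On $S'$, \eqref{princ} gives $\partial_sH_s(u)\ge\partial_sh_s(r_0)$ and $r_0\,\partial_sh_s'(r_0)\ge 0$, so the integrand is $\le 0$.
\end{itemize}
Hence $E(u|_{S'})<0$, a contradiction.

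\textbf{Comparison with the paper.} Corrected this way, your argument is a genuine alternative to the paper's proof. The paper handles constant upper orbits by the action bound alone. For $x=(\gamma,r_*)$ in the concave region, it combines $r\le r_*$ (from Lemma 3.5 of \cite{gt}) with monotonicity of $f(s)=\int_{S^1}\ln r(s,t)\,dt$, again via \eqref{princ} and concavity of $h_0$, to show $u$ never leaves $\{r=r_*\}$. The Stokes version treats all upper orbits uniformly and does not need the external no-escape lemma. The cost is the orientation bookkeeping above.
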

\begin{proof}
    Let $x$ be an upper orbit of $H_0$ and $y$ be a lower orbit of $H_1$. If $x$ is a constant orbit, then 
    \begin{align*}
        \mathcal{A}_{H_0}(x)= c_{H_0} >0.
    \end{align*}
    Since the action is increasing along a Floer trajectory and $\mathcal{A}_{H_1}(y)\leq 0$ as observed above, there is no Floer trajectory of $H_s$ connecting $x$ to $y$. In the case where $x=(\gamma,r_*)$ is a nonconstant orbit, suppose that there exists a Floer trajectory $u$ of $H_s$ connecting $x $ to $y$, namely,
    \begin{align}\label{fe}
        \partial_su + J(u) \left(\partial_t u -X_{H_s}(u)\right) = 0,
    \end{align}
   with asymptotic conditions
    \begin{align*}
        \lim_{s \to -\infty} u(s,t) = x(t) \,\,\textrm{and} \,\,\lim_{s \to \infty} u(s,t) = y(t).
    \end{align*}
     Since the Floer trajectory $u$ starts from $x=(\gamma,r_*) \in \partial K \times [1+2r_1,1+3r_1]$ and ends at $y \in K \cup\left(\partial K \times [1,1+r_1]\right)$, it must leave the region $\partial K \times [1+2r_1,1+3r_1]$ at some point. More specifically, let
    \begin{align*}
        s_0 =\sup \left\{s\in\RR \bigmid \textrm{Im}(u|_{(-\infty,s) \times S^1}) \subset \partial K \times [1+2r_1,1+3r_1]  \right\}.
    \end{align*}
     For $(s,t) \in (-\infty,s_0)\times S^1$, we can decompose $u(s,t) = (v(s,t),r(s,t)) \in \partial K \times [1+2r_1,1+3r_1]$. The radial coordinate $r(s,t)$ satisfies 
        \begin{align}\label{gt}
            r(s,t) \leq r_*
        \end{align}
        since the image of $u$ cannot leave $K_{r_*} =  K \cup (\partial K \times[1,r_*]) $ by Lemma 3.5 of \cite{gt}. Define a function $f:(-\infty,s_0) \to \RR$ by
        \begin{align*}
            f(s) = \int_{S^1} \ln r(s,t) dt. 
        \end{align*}
       % Then by the restriction \eqref{gt} on $r$, we have 
       % \begin{align}\label{lnr}
       %     f(s) \leq \ln r_*.
       % \end{align}
         By the second equation of \eqref{reeb1}, we compute 
         \begin{align}\label{f'}
            f'(s) = \int_{S^1} \frac{\partial_sr(s,t) }{r(s,t)} dt = \int_{S^1} \left(\alpha\left(\partial_t v(s,t) \right) + h_s'(r(s,t)) \right)dt.
                    \end{align}
        To find an estimate for the first integral of \eqref{f'}, let
        \begin{align*}
            g(s) =\int_{S^1} \alpha\left(\partial_t v(s,t)\right) dt. 
        \end{align*}
        Since $v^*(d \alpha) = d (v^* \alpha)$, we have
        \begin{align*}
             d \alpha(\partial_s v, \partial_t v) =  \frac{\partial}{\partial s} \alpha(\partial_t v) - \frac{\partial}{\partial t} \alpha(\partial_s v).
        \end{align*}
        Using this, we compute
         \begin{align}\label{in}
            g'(s) = \int_{S^1} \frac{\partial }{\partial s} \,\alpha\left(\partial_t v \right) dt =\int_{S^1}\left\{ d \alpha\left(\partial_s v, \partial_t v  \right) + \frac{\partial }{\partial t} \alpha\left(\partial_s v \right)  \right\}dt.
        \end{align} 
        Note that the first integrand of \eqref{in} is nonnegative because
        \begin{align*}
            d \alpha\left(\partial_s v, \partial_t v  \right) &= d \alpha\left(\pi_\xi\left(\partial_s v\right), \pi_\xi\left(\partial_t v\right) \right)\\&=
            d \alpha\left(\pi_\xi\left(\partial_s v\right), J(v)\pi_\xi\left(\partial_s v\right) \right)&&\textrm{by the first equation of \eqref{reeb1}}\\&
            \geq 0
        \end{align*}
        where the last inequality follows from the compatibility of $d \alpha$ with $J$ on $\xi$. The second integral
        \begin{align*}
            \int_{S^1} \frac{\partial}{\partial t} \alpha\left( \partial_s v\right) dt 
        \end{align*}
        of \eqref{in} is zero since $v$ is periodic in $t$. Consequently, $g'(s) \geq 0$ on $(-\infty,s_0)$ and thus 
        \begin{align}\label{pos2}
            g(s) \geq \lim_{s \to -\infty}g(s) = \int_{S^1} \alpha \left(\frac{d\gamma}{dt}\right) dt = -h_0'(r_*).
        \end{align}
       The last equality follows from the fact that the Reeb orbit $\gamma$ is traversed in the opposite direction to $x$ and the period of $\gamma$ is $h_0'(r_*)$.
        To estimate the second term in \eqref{f'}, we use the assumption \eqref{princ}. Since $1+2r_1 \leq r(s,t) \leq r_*\leq 1+3r_1$ for $s< s_0$ and $h$ is concave on $(1+2r_1,1+3r_1)$, it follows that
\begin{align}\label{pos}
    h_s'(r(s,t)) \geq h_0'(r(s,t)) \geq h'_0(r_*).
\end{align}
Combining \eqref{pos2} and \eqref{pos} together, we obtain 
\begin{align*}
    f'(s) \geq -h'_0(r_*) +h'_0(r_*) =0,
\end{align*}
    and therefore $f$ is increasing. Consequently,
    \begin{align*}
        f(s) \geq \lim_{s \to -\infty} f(s) = \ln  r_*.
    \end{align*}
    Moreover, for $s<s_0$, it follows from \eqref{gt} that
    \begin{align*}
        r(s,t) =  r_*.
    \end{align*}
    This implies that the Floer trajectory cannot escape the region $\partial K \times [1+2r_1,1+3r_1]$, which contradicts the assumption that it connects an orbit in $\partial K \times [1+2r_1,1+3r_1]$ to an orbit outside this region. This completes the proof.\\ 
    \end{proof}
 For any $K$-semi-admissible function $H$, let $CF_{U}(H)$ denote the submodule of $CF(H)$ generated by upper orbits of $H$. Proposition \ref{nof} implies that the differential preserves $CF_U(H)$, and hence $CF_U(H)$ is a subcomplex of $CF(H)$. Let $$CF_L(H)= CF(H)/CF_U(H).$$ Note that $CF_L(H)$ is a cochain complex generated by lower orbits of $H$, with differential induced by the Floer differential on $CF(H)$. We denote the cohomologies of $CF_U(H)$ and $CF_L(H)$ by 
 \begin{align*}
     HF_U(H)\,\,\textrm{ and}\,\, HF_L(H),
 \end{align*}
 respectively. The action filtrations 
 \begin{align*}
    HF^{>-\tau}_U(H)\,\,\textrm{ and}\,\, HF^{>-\tau}_L(H)
 \end{align*}
 of $HF_U(H)$ and $HF_L(H)$ can be defined in a natural manner. Furthermore, Proposition \ref{nof} implies that for $H_0 \leq H_1$ and the homotopy $H_s$ satisfying the condition of Proposition \ref{nof}, we have continuation maps 
\begin{align}\label{conti}
    c_{U}^{H_0,H_1} : CF_{U}(H_0) \lr CF_{U}(H_1)\,\,\text{and}\,\, c_{L}^{H_0,H_1} : CF_{L}(H_0) \lr CF_{L}(H_1). 
\end{align}
These continuation maps induce the corresponding maps on cohomology and are compatible with the action filtrations.

The next theorem provides a description of the filtered relative symplectic cohomology in terms of the Floer cohomology generated by the lower orbits of a single $K$-semi-admissible Hamiltonian function, together with precise control of the action filtration.
  \begin{theorem}\label{a2t}
Let $H$ be a $K$-semi-admissible Hamiltonian function. Then we have an isomorphism
     \begin{align*}
        SH^{>-\tau}_M(K) \cong HF_{L}^{>a_{ H}(\tau)}( H)
    \end{align*}
    for $0 \leq \tau \leq s_{H} $ where $a_{H}(\tau) = \left( A_{ h_1} \circ ( h_1')^{-1} \right)(\tau).$
\end{theorem}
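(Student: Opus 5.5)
The plan is to compute $SH^{>-\tau}_M(K)$ through the sequence $\sigma H$, $\sigma\to\infty$, using \eqref{ivo}, and to show that all the relevant information already sits in the lower part of $H$ itself. There are three steps, matching the outline in the introduction.

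\emph{Step 1: discard the upper orbits.} For each $\sigma\ge 1$ we have the short exact sequence
\begin{align*}
0\lr CF_U^{>-\tau}(\sigma H)\lr CF^{>-\tau}(\sigma H)\lr CF_L^{>-\tau}(\sigma H)\lr 0 .
\end{align*}
These sequences are compatible with the continuation maps \eqref{conti}. The linear homotopy $\sigma H\to\sigma' H$ satisfies \eqref{princ}, since $\partial_s h_s'=(\sigma'-\sigma)h'\ge 0$. Direct limits are exact, and completion preserves exactness because every term is free, so the sequences stay exact after tensoring with $\Lambda_{\ge0}/\Lambda_{\ge r}$. It therefore suffices to show that $\widehat{\varinjlim}\, CF_U^{>-\tau}(\sigma H)$ is acyclic.

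To prove this, I would show that the continuation map $CF_U(\sigma H)\to CF_U(\sigma' H)$ is divisible by $T^{\eta}$ for some $\eta>0$ independent of $\sigma$. A trajectory starting at an upper orbit either leaves the collar, in which case Proposition \ref{mon} gives energy at least $\eta_1$, or stays inside it. In the second case the constant outer orbits have action $\sigma' c_H-\sigma c_H$, which grows with $\sigma'$, and for nonconstant upper orbits the action gap is bounded below by the growth of $c_{\sigma H}$. Proposition \ref{pe} controls the Reeb periods. The completed direct limit of a system whose maps are divisible by $T^\eta$ has vanishing cohomology (the telescope is $T^{\infty}$-torsion and completion kills it). I expect this energy estimate to be the first technical point.

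\emph{Step 2: the lower complexes stabilize in the window.} For a lower orbit $x=(\gamma,r_*)$ of $\sigma H$ we have $\mathcal A(\gamma)<\tau$ whenever its action exceeds $-\tau$. For $\tau\le s_H\le s_{\sigma H}$, these are exactly the orbits with $h_1'(r_*)<\tau$. I would deform the convex part so that $\sigma H$ agrees with a reparametrization of $H$ on the region where $h'<\tau$. Concretely, I would use a monotone homotopy satisfying \eqref{princ} that is fixed, in the sense of Reeb periods, on orbits of period $<\tau$. Via the action map $a\mapsto$ (action at fixed Reeb period), this identifies $CF_L^{>-\tau}(\sigma H)$ with $CF_L^{>a_H(\tau)}(H)$: the orbit with period $T<\tau$ has action $A_{h_1}((h_1')^{-1}(T))$ for $H$, and this action is monotone in $T$, so the cutoff transforms to $a_H(\tau)=A_{h_1}\circ(h_1')^{-1}(\tau)$. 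Proposition \ref{pe} shows that, inside the collar, the trajectories between lower orbits do not increase period, so the window by period is a subcomplex. Orbits of periods between $\tau$ and $s_H$ lie below the window on both sides. Standard continuation-is-isomorphism arguments (small-energy filtration, upper-triangular maps with identity diagonal, the diagonal coming from Proposition \ref{pe}'s equality case) then show that all maps in the direct system are quasi-isomorphisms in this window. The completion is harmless because the complexes are finitely generated and free.

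\emph{Main obstacle.} The hardest part is Step 2, namely matching filtrations across different Hamiltonians. The filtration by action is not preserved by rescaling, but the filtration by Reeb period is. I would first prove that, on lower orbits, the subcomplex $\{\text{period}<\tau\}$ coincides with $\{\mathcal A>a(\tau)\}$ for each Hamiltonian, using the monotonicity of $A_{h_1}$. Then I would show that the continuation maps preserve period filtrations, by combining Propositions \ref{mon}, \ref{pe} and \ref{nof}.
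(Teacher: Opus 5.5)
Your Step 1 and the final assembly are essentially the paper's Lemmas \ref{lem1} and \ref{lem3}. One caveat on Step 1: the constant-to-nonconstant upper case must be excluded by the radial argument of Proposition \ref{nof}, not by an energy bound.

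The genuine gap is in Step 2, at the sentence ``the completion is harmless because the complexes are finitely generated and free.'' That is not a valid reason. The system $\Lambda_{\ge 0}\xrightarrow{T^{\epsilon}}\Lambda_{\ge 0}\xrightarrow{T^{\epsilon}}\cdots$ consists of free rank-one modules, yet its direct limit is $\Lambda$ and its completion is $0$. This is exactly the mechanism you use in Step 1 to kill $CF_U$. Moreover, the lower continuation maps do not have identity diagonal. The orbit of $\sigma_\ell H$ over a fixed Reeb orbit $\gamma_i$ has action $-r_\ell\mathcal{A}(\gamma_i)+\sigma_\ell h(r_\ell)$, whose $\sigma$-derivative at fixed period is $h(r_\ell)>0$. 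So the diagonal entries are $T^{\epsilon_{\ell,i}}$ with $\epsilon_{\ell,i}>0$, and the maps are not isomorphisms over $\Lambda_{\ge 0}$. You also cannot pass to $\Lambda$ before completing, since the completion of any $\Lambda$-vector space over $\Lambda_{\ge 0}$ is zero. What is actually needed is Lemma \ref{lem2}. There the continuation map has the form $c^\ell_L=D_\ell+T^{\lambda_\ell}B_\ell$ with two properties. First, $\sum_\ell\epsilon_{\ell,i}<\infty$: the diagonal energies telescope, because $\mathcal{A}_{\sigma_\ell H}(x_{\ell,i})\to-\mathcal{A}(\gamma_i)$ by \eqref{lim1} and \eqref{lim2}. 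Second, $\lambda_\ell\ge\max_i\epsilon_{\ell,i}$, because off-diagonal energies are uniformly bounded below by $m$, $\min\textrm{Spec}(\partial K,\alpha)$ or $\eta_1$. Lemma 5.5 of \cite{dgpz} then applies. Without this summability, nothing in your argument distinguishes the lower system from the upper one that dies under completion.

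The other half of Step 2 is also not established. Proposition \ref{pe} controls Reeb periods only for trajectories that stay in the collar. For a trajectory entering $K$, Proposition \ref{mon} gives only $E_{\textrm{top}}>\eta_1$, which does not prevent the period from increasing. So combining Propositions \ref{mon}, \ref{pe} and \ref{nof} does not show that the continuation maps preserve the period filtration, least of all for the map out of $H$ itself. The paper gets period monotonicity only for large $\ell$, via the asymptotic action argument in Lemma \ref{lem2}: the energy would converge to $\mathcal{A}(\gamma_x)-\mathcal{A}(\gamma_y)<0$. It does not compare $H$ with $\sigma H$ through period-preserving continuation maps. Instead it confines trajectories between lower orbits to $K_{1+r_1}$ by Lemma 3.5 of \cite{gt}, invokes Proposition 3.1 of \cite{cggm2} for natural isomorphisms $HF_L^{>a_H(\tau)}(H)\cong HF_L^{>a_{\sigma H}(\tau)}(\sigma H)$, and finally uses $a_{\sigma H}(\tau)\to-\tau$. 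That limit is necessary because your claim that $CF_L^{>-\tau}(\sigma H)$ consists exactly of the orbits with Reeb period $<\tau$ is false for fixed $\sigma$. Action $>-\tau$ forces period $<\tau$, but not conversely, and the two windows agree only for $\sigma$ large. Your ``deform the convex part'' sketch gestures at the profile-independence statement of \cite{cggm2}, but as written it does not prove it.
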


The proof of Theorem \ref{a2t} requires the following lemmas.

\begin{lemma}\label{lem1}
For any $K$-semi-admissible Hamiltonian function $H$ and for any $\tau\in\RR\cup \{\infty\}$, we have
\begin{align*}
    \widehat{\varinjlim_{\sigma \to \infty}}CF^{>-\tau}_{U}(\sigma H) = 0.
\end{align*}
\end{lemma}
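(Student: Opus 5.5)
The plan is to show that the direct limit is \emph{torsion in the $T$-adic sense}: every element of $\varinjlim_\sigma CF^{>-\tau}_U(\sigma H)$ becomes divisible by arbitrarily large powers of $T$ after being pushed far enough along the continuation maps. The completion of such a module then vanishes, because for each $r>0$ the tensor product $\varinjlim_\sigma CF^{>-\tau}_U(\sigma H)\ton \Lambda_{\ge 0}/\Lambda_{\ge r}$ is zero. Tensor products commute with direct limits, so it suffices to show the following: for each fixed $\sigma_0$ and each generator $x$ of $CF^{>-\tau}_U(\sigma_0 H)$, there is $\sigma_1\ge\sigma_0$ such that the continuation map $c_U^{\sigma_0 H,\sigma_1 H}(x)$ lies in $T^{r}CF_U(\sigma_1 H)$. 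Throughout, I use the monotone homotopy $H_s=\beta(s)H$ with $\beta$ nondecreasing. Since $h'\ge0$ on the collar, this satisfies $\partial_s h_s'=\beta'(s)h'\ge0$, which is condition \eqref{princ}. Proposition \ref{nof} therefore applies, and $c_U$ is defined.

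The key estimate is the following action bound. For an upper orbit $x$ of $\sigma_0H$, the constant upper orbits have action $\sigma_0 c_H$. For a nonconstant upper orbit $x=(\gamma,r_*)$ with $r_*\in[1+2r_1,1+3r_1]$, concavity of $h_2$ gives
\begin{align*}
\mathcal{A}_{\sigma_0H}(x)\ \ge\ \sigma_0 c_H-(1+3r_1)\sigma_0 h'(r_*).
\end{align*}
I will not use this bound directly. Instead I will use the action of the image. Any upper orbit $y$ of $\sigma_1 H$ has action at least $-\sigma_1 i_H$ by \eqref{ima}, which is too weak. The better route is to use the topological energy identity
\begin{align*}
E_{\rm top}(u)=\int\omega(\partial_s u,J\partial_s u)+\int \beta'(s)H(t,u)\,ds\,dt .
\end{align*}
A continuation trajectory from an upper orbit $x$ of $\sigma_0H$ stays, by Proposition \ref{nof}, among upper orbits at $+\infty$. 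I want to show that it must spend its $s$-interval where $\beta'>0$ in a region where $H$ is bounded below by a positive constant $\delta$. The region $\partial K\times[1+2r_1,\infty)$ does this, since $H\ge h(1+2r_1)=s_H(1+2r_1)-i_H>0$ there, assuming positivity. If the trajectory does, then $E_{\rm top}(u)\ge \delta(\sigma_1-\sigma_0)$.

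The main obstacle is the confinement step: showing that the trajectory does not dip into the region where $H$ is small during the time when $\beta'>0$. Here I would reuse the maximum-principle argument of Proposition \ref{nof}. Its proof shows that for $s$ before exiting the concave collar, $f(s)=\int\ln r\,dt$ is nondecreasing. This prevents decreasing $r$ below $r_*$ in average. The argument should extend to show that the trajectory stays in $\{r\ge 1+2r_1\}$ together with the constant outer region. The outer region is handled by \cite{gt} Lemma 3.5 and by the fact that the ends are upper orbits. If this confinement is too delicate, the fallback is the following argument. Choose a cofinal sequence $\sigma_k$ and note that the differences $\mathcal{A}_{\sigma_1H}(y)-\mathcal{A}_{\sigma_0H}(x)$ between upper orbits grow linearly in $\sigma_1$. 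The constant upper orbits have action $\sigma c_H$. For the nonconstant ones, $A_{h_2}$ is increasing, so their actions scale at least like $\sigma\cdot\min A_{h_2}$ on the part of the collar where upper orbits of slope $\le \sigma_0 s_H$-type periods live. Together with $E_{\rm top}=\mathcal{A}_{\sigma_1H}(y)-\mathcal{A}_{\sigma_0H}(x)$, this again forces weights $T^{E}$ with $E\to\infty$.

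Once the divisibility holds for each generator with $r$ arbitrary, every element of the direct limit is infinitely $T$-divisible modulo $\Lambda_{\ge r}$. Hence each $\varinjlim CF^{>-\tau}_U\ton\Lambda_{\ge0}/\Lambda_{\ge r}$ vanishes, and the inverse limit is $0$. The filtration level $\tau$, including $\tau=\infty$, plays no role, because $c_U$ preserves the filtration and the argument is generator-wise.
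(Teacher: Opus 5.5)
Your reduction is sound: if every element of $\varinjlim_\sigma CF^{>-\tau}_U(\sigma H)$ is divisible by arbitrarily high powers of $T$, then each $\varinjlim CF^{>-\tau}_U(\sigma H)\ton\Lambda_{\ge0}/\Lambda_{\ge r}$ vanishes, and so does the completion. This is what the paper extracts from Lemma 5.3 of \cite{dgpz}.

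\textbf{The gap.} The energy estimate that should feed this reduction is never established.

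Your main route rests on a confinement claim that is unproven, and the justification you suggest does not deliver it. The $f(s)=\int_{S^1}\ln r\,dt$ argument of Proposition \ref{nof} has three limitations:
\begin{enumerate}
\item It only runs while $u$ stays in the concave collar.
\item It needs the a priori bound $r\le r_*$. In Proposition \ref{nof} that bound comes from both ends lying in $K_{r_*}$, which fails when $y$ is an upper orbit lying further out.
\item Even with that bound, it controls only the $t$-average of $\ln r$, not $r$ pointwise.
\end{enumerate}
Nothing prevents $u$ from dipping into the linear region or toward $K$, where $H$ is small, while $\beta'>0$.

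Your fallback is incorrect as stated. Action bookkeeping alone gives no positive lower bound when $y$ is a nonconstant upper orbit. Indeed $\mathcal{A}_{\sigma_1H}(y)=\sigma_1A_{h_2}(r_y)$, and $A_{h_2}$ can be as small as $-i_H$, so the bound $\sigma\min A_{h_2}=-\sigma i_H$ is useless. Two dynamical inputs are needed:
\begin{enumerate}
\item Control of where $y$ sits, i.e.\ that the Reeb period does not increase along $u$ (Proposition \ref{pe}). Your phrase about upper orbits of period at most $\sigma_0 s_H$ implicitly assumes this.
\item A separate argument excluding trajectories from a constant $x$ to a nonconstant $y$.
\end{enumerate}

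\textbf{How the paper closes it.} The paper uses a dichotomy.
\begin{itemize}
\item If $\textrm{Im}(u)\not\subset\partial K\times[1-r_1,1+4r_1]$, the monotonicity lemma (as in Proposition \ref{mon}) gives $E_{\textrm{top}}(u)>\eta_1$.
\item Otherwise $u=(v,r)$ globally, and there are four cases:
\begin{itemize}
\item Constant to constant, and nonconstant to constant: handled by actions, using $A_{h_2}\le c_H$, which gives $E_{\textrm{top}}(u)\ge(\sigma_{\ell+1}-\sigma_\ell)c_H$.
\item Constant to nonconstant: excluded by the $f$-argument, now with $g\ge0$ because $x$ is constant.
\item Nonconstant to nonconstant: Proposition \ref{pe} gives $\sigma_\ell h'(r_x)\ge\sigma_{\ell+1}h'(r_y)$, hence $r_x\le r_y$ by concavity. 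With $\sigma_{\ell+1}\ge2\sigma_\ell$ this yields $E_{\textrm{top}}(u)\ge\sigma_\ell h(1+r_1)$.
\end{itemize}
\end{itemize}

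\textbf{Consequence for your target.} The escaping case yields only the $\sigma$-independent constant $\eta_1$. So a bound $E_{\textrm{top}}\ge\delta(\sigma_1-\sigma_0)$, or any $E\to\infty$, for a single continuation map $\sigma_0H\to\sigma_1H$ is not what the available tools give. The right target is a uniform $c>0$ with $c^\ell_U(CF^{>-\tau}_U(\sigma_\ell H))\subset T^cCF^{>-\tau}_U(\sigma_{\ell+1}H)$ for every consecutive step. Arbitrary $T$-divisibility then comes from composing $n$ steps.

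This also fixes a smaller issue. The structure maps of the direct system are composites of consecutive continuation maps. At chain level these are not the single continuation map along $\beta(s)H$ that you use.
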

\begin{proof}
Let $\{\sigma_\ell\}$ be an increasing sequence of positive numbers such that $\displaystyle\lim_{\ell \to \infty} \sigma_\ell = \infty$ and $\sigma_\ell s_H \notin \textrm{Spec}(\partial K, \alpha)$. Let $$c^{\ell}_U = c_U^{\sigma_\ell H, \sigma_{\ell+1}H} : CF^{>-\tau}_U(\sigma_\ell H) \lr CF^{>-\tau}_U(\sigma_{\ell+1}H) $$ be the continuation map explained in \eqref{conti}. To prove the claim, it suffices to show that there exists a constant $c>0$ such that for every $\ell =1,2,3,\cdots$,
\begin{align*}
    c^\ell_U\left(CF^{>-\tau}_U(\sigma_\ell H)\right) \subset T^c CF^{>-\tau}_U(\sigma_{\ell+1}H) 
\end{align*}
by Lemma 5.3 of \cite{dgpz}.

Let $H_s$ be the monotone homotopy connecting $\sigma_\ell H$ to $\sigma_{\ell +1} H$ satisfying the condition of Proposition \ref{nof} and let $u: \RR \times S^1 \lr M$ be a Floer trajectory of $H_s$ which is asymptotic to an upper orbit $x$ of $\sigma_\ell H$ at $-\infty$ and to an upper orbit $y$ of $\sigma_{\ell+1} H$ at $\infty$. We shall prove that the topological energy of $u$ admits a positive lower bound independent of $u$ and $\ell$. 

If the image of $u$ is not entirely contained in $\partial K \times [1-r_1,1+4r_1]$, then, as in Proposition \ref{mon}, the monotonicity theorem for holomorphic curves implies the existence of a constant $\eta_1 >0$ such that $E_{\textrm{top}}(u) > \eta_1$. 

In the case where the image of $u$ is completely contained in $\partial K \times [1-r_1,1+4r_1]$, we decompose $u(s,t) = (v(s,t), r(s,t)) \in \partial K \times [1-r_1,1+4r_1]$ for all $(s,t) \in \RR \times S^1$.\vspace{0.2cm} 

\textit{Case 1.} The orbits $x$ and $y$ are constant.\vspace{0.2cm}

In this case, the topological energy of $u$ is given by 
\begin{align*}
    E_{\textrm{top}}(u) &= \mathcal{A}_{\sigma_{\ell+1} H}(y) - \mathcal{A}_{\sigma_{\ell} H}(x)=(\sigma_{\ell +1} -\sigma_\ell )c_H.  
\end{align*}
Passing to a subsequence if necessary, we may assume that $\sigma_{\ell +1} \ge 2 \sigma_\ell$. Therefore,
\begin{align*}
    E_{\textrm{top}}(u) \ge \sigma_\ell c_H \ge \sigma_1 c_H.
\end{align*}

\textit{Case 2.} The orbit $x$ is constant while the orbit $y$ is nonconstant.\vspace{0.2cm}

We claim that no such Floer trajectory of $H_s$ exists. The argument is essentially the same as in the proof of Proposition \ref{nof}, so we only sketch it here. Write $x=(p,r_x)$ and $y=(\gamma,r_y)$ where $p$ is a point and $\gamma $ is a Reeb orbit of $(\partial K,\alpha)$. Since $x$ is constant whereas $y$ is nonconstant, we have $r_x \ge r_y$. Moreover, the radial coordinate $r(s,t)$ satisfies $ r(s,t) \leq r_x$ since the image of $u$ cannot leave $K_{r_x} =  K \cup (\partial K \times[0,r_x]) $ by Lemma 3.5 of \cite{gt}. Define $f:\RR \to \RR$ by $f(s) = \displaystyle\int_{S^1} \ln r(s,t) dt$. Then, as we have seen in the proof of Proposition \ref{nof}, 
        \begin{align*}
             f'(s) = g(s) + \int_{S^1}  h_s'(r(s,t))dt\,\,\,\,\,\textrm{where}\,\,\,\, g(s) =\int_{S^1} \alpha\left(\partial_t v(s,t)\right) dt.
        \end{align*}
 The function $g(s)$ is nondecreasing from the proof of Proposition \ref{nof}. Since $x$ is a constant orbit, $\displaystyle\lim_{s \to -\infty} \partial_t v = 0$, meaning $\displaystyle\lim_{s \to -\infty} g(s) = 0$. Hence, $g(s) \ge 0$. Since $h_s' \ge 0$, it follows that $f(s)$ is a nondecreasing function and $f(s) \ge \displaystyle\lim_{s\to -\infty} f(s) = \ln r_x$. This implies that $r(s,t) = r_x$ for all $(s,t) \in \RR \times S^1$, which is a contradiction.\vspace{0.2cm}

 \textit{Case 3.} The orbit $x$ is nonconstant while $y$ is constant.\vspace{0.2cm}

 By \eqref{ima}, we have $\mathcal{A}_{\sigma_\ell H}(x) \le c_{\sigma_\ell H} = \sigma_\ell c_H$. Since $\mathcal{A}_{\sigma_{\ell+1}H}(y) = c_{\sigma_{\ell+1}H} = \sigma_{\ell+1} c_H $, we have
 \begin{align*}
    E_{\textrm{top}}(u) &= \mathcal{A}_{\sigma_{\ell+1} H}(y) - \mathcal{A}_{\sigma_{\ell} H}(x)\ge(\sigma_{\ell +1} -\sigma_\ell )c_H. 
 \end{align*}
Assuming $\sigma_{\ell +1} \ge 2 \sigma_\ell$, we obtain
\begin{align*}
    E_{\textrm{top}}(u) \ge \sigma_\ell c_H \ge \sigma_1 c_H.
\end{align*}

\textit{Case 4.} The orbits $x$ and $y$ are nonconstant.\vspace{0.2cm}

Write $x=(\gamma_x,r_x)$ and $y = (\gamma_y, r_y)$ for $r_x,r_y \in (1+2r_1,1+3r_1)$. Proposition \ref{pe} implies that 
\begin{align}\label{p1}
    \sigma_\ell h'(r_x ) =\mathcal{A}(\gamma_x) \geq  \mathcal{A}(\gamma_y)= \sigma_{\ell+1} h'(r_y ).  
\end{align}
Because $\sigma_\ell \leq \sigma_{\ell+1}$, it follows that
\begin{align*}
    h'(r_x) \ge h'(r_y).
\end{align*}
By the concavity of $h$ on $[1+2r_1, 1+3r_1]$, we have $r_x \leq r_y$ and hence 
\begin{align}\label{p2}
    h(r_x) \leq h(r_y).
\end{align} 
Then the action of $y$ satisfies that
\begin{align*}
    \mathcal{A}_{\sigma_{\ell+1}H}(y) &= \sigma_{\ell+1} (-r_yh'(r_y)+h(r_y))\\
    &\ge  -\sigma_\ell r_y h'(r_x) +\sigma_{\ell+1} h(r_y)&&\textrm{by \eqref{p1}}\\
    &\ge -\sigma_\ell r_y h'(r_x) +\sigma_{\ell+1} h(r_x)&&\textrm{by \eqref{p2}},
\end{align*}
and therefore, the topological energy of $u$ admits the estimate
\begin{align*}
    E_{\textrm{top}}(u) &= \mathcal{A}_{\sigma_{\ell+1}H}(y) - \mathcal{A}_{\sigma_{\ell}H}(x)\\
    &\ge -\sigma_\ell r_y h'(r_x) +\sigma_{\ell+1} h(r_x) - \sigma_\ell (-r_x h'(r_x)+h(r_x))\\
    &=-\sigma_\ell h'(r_x)(r_y -r_x) +(\sigma_{\ell+1} - \sigma_\ell) h(r_x)\\
     &\ge -\sigma_\ell s_H r_1 +(\sigma_{\ell+1} - \sigma_\ell) h(r_x)
\end{align*}
where the last inequality follows from the facts that $h'(r_x) \leq s_H$ and $r_y-r_x \leq r_1$. We may choose an increasing sequence $\{\sigma_\ell\}$ such that $\sigma_{\ell+1} \geq 2 \sigma_\ell$. Then 
\begin{align*}
    E_{\textrm{top}}(u) \geq \sigma_\ell(h(r_x)-r_1s_H). 
\end{align*}
Since $h(r_x) \geq h(1+r_1) + r_1 s_H$, we have 
\begin{align*}
    E_{\textrm{top}}(u) \geq \sigma_\ell h(1+r_1). 
\end{align*}

Combining all four cases above, we may take $$c = \min\{\eta_1, \sigma_1 c_H,\sigma_1 h(1+r_1)\}= \min\{\eta_1, \sigma_1 h(1+r_1)\}.$$ This completes the proof.\\
\end{proof}

\begin{lemma}\label{lem2}
    For any $K$-semi-admissible Hamiltonian function $H$ and for any $\tau\in\RR$, we have
\begin{align*}
  \widehat{\varinjlim_{\sigma \to \infty}}CF^{>-\tau}_{L}(\sigma H) \cong \varinjlim_{\sigma \to \infty}CF^{>-\tau}_{L}(\sigma H).
\end{align*}
\end{lemma}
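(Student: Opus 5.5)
The idea is to show that the direct system $\{CF^{>-\tau}_{L}(\sigma H)\}_\sigma$ stabilizes in a strong sense: for $\sigma$ large the generators, the continuation maps and the torsion-freeness are all controlled. In that situation completion changes nothing. I fix an increasing sequence $\sigma_\ell\to\infty$ with $\sigma_\ell s_H\notin\textrm{Spec}(\partial K,\alpha)$, as in the proof of Lemma \ref{lem1}.

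The first step is to identify the generators. By the computation preceding Proposition \ref{nof}, a lower orbit $x=(\gamma,r_*)$ of $\sigma_\ell H$ with $\mathcal{A}_{\sigma_\ell H}(x)>-\tau$ satisfies $\mathcal{A}(\gamma)<\tau$. Since $\partial K$ is compact, there are only finitely many Reeb orbits of period less than $\tau$, after a nondegenerate perturbation or a Morse--Bott perturbation of the convex part. Together with the constant orbits in $K$, this shows that $CF^{>-\tau}_L(\sigma_\ell H)$ is a free $\Lambda_{\geq 0}$-module of rank bounded independently of $\ell$. Once $\sigma_\ell s_H>\tau$, the generators correspond bijectively to the same finite set of Reeb orbits of period $<\tau$ together with the critical points in $K$.

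The second step is to show that the continuation maps $c_L^{\ell}$ are eventually isomorphisms of $\Lambda_{\geq0}$-modules. I would argue that they are upper triangular with respect to the period ordering, with leading coefficient $T^{e}$ on the diagonal. The upper triangularity follows from Proposition \ref{pe}, since trajectories staying in the collar do not increase the period, and the diagonal entries count only trivial cylinders over the same Reeb orbit. For trajectories leaving the collar, the energy is at least $\eta_1$ by Proposition \ref{mon}; I would either absorb these into strictly off-diagonal terms or rule them out for lower orbits by an action argument.

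The main obstacle is that the diagonal exponent $e=\mathcal{A}_{\sigma_{\ell+1}H}(y)-\mathcal{A}_{\sigma_\ell H}(x)$ is positive, so the maps are not literally isomorphisms over $\Lambda_{\geq0}$. To handle this, I would rescale each generator by $T^{-\mathcal{A}}$ so that the continuation maps become the identity plus terms of positive valuation. Equivalently, I would choose the family $\sigma H$ so that the convex parts $h_1$ stay fixed near $r=1$; this is the content of the function $a_H(\tau)$ in Theorem \ref{a2t}. Then the limit $\varinjlim CF^{>-\tau}_L(\sigma H)$ is a finitely generated free $\Lambda_{\geq 0}$-module, or at least a module of the form $\bigoplus\Lambda_{\geq -a}$ with finitely many summands and bounded shifts.

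The final step is to show that such a module is already complete. For a finite free module $F$, the natural map $F\to\varprojlim F\ton\Lambda_{\geq0}/\Lambda_{\geq r}$ is an isomorphism, because $\Lambda_{\geq0}$ is itself complete: its elements are series with $\lambda_i\to\infty$. Finite direct sums commute with inverse limits, so the completion of the limit equals the limit itself, which is the claimed isomorphism.
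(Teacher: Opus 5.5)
Your first two steps are essentially the paper's: finitely many generators that stabilize, and continuation maps triangular in the period ordering with diagonal weights $T^{\epsilon_{\ell,i}}$. The decisive algebraic step, however, fails as written.

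The diagonal exponents are strictly positive at every stage. For a fixed Reeb orbit $\gamma$ with $\sigma h'(r_\sigma)=\mathcal{A}(\gamma)$, one computes $\frac{d}{d\sigma}\mathcal{A}_{\sigma H}(x_\sigma)=h(r_\sigma)>0$. So the actions increase strictly to $-\mathcal{A}(\gamma)$ without ever reaching it; likewise $f(p)/2^\ell\nearrow 0$ for the constant orbits. Already in rank one,
$\varinjlim\bigl(\Lambda_{\geq 0}\xrightarrow{T^{\epsilon_1}}\Lambda_{\geq 0}\xrightarrow{T^{\epsilon_2}}\cdots\bigr)=\bigcup_n\Lambda_{\geq -(\epsilon_1+\cdots+\epsilon_n)}=\Lambda_{>-S}$ with $S=\sum_n\epsilon_n$. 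This module is not finitely generated and is not of the form $\Lambda_{\geq -a}$.

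So your claim that the limit is finite free, or $\bigoplus\Lambda_{\geq -a}$, is false. Your last paragraph proves completeness only for finite free modules, so it says nothing about the module you actually get.

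Relatedly, rescaling does not give ``identity plus positive valuation.'' Since $E_{\mathrm{top}}(u)$ equals the action difference for every trajectory, in the basis $\tilde x=T^{\mathcal{A}(x)}x$ all matrix entries are plain rational counts. The positive exponents have moved into the stage-$\ell$ lattices $\bigoplus_i\Lambda_{\geq -\mathcal{A}(x_{\ell,i})}\tilde x_i$, which grow strictly with $\ell$. Keeping $h_1$ fixed is not an option for the family $\sigma H$. Leaning on $a_H(\tau)$ would be circular, since Theorem \ref{a2t} is deduced from this lemma via Lemma \ref{lem3}.

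What is missing is a proof that this non-finitely-generated limit is complete, and here the off-diagonal terms matter. The paper establishes $c^\ell_L=D_\ell+T^{\lambda_\ell}B_\ell$ with two properties:
\begin{enumerate}
\item $\sum_\ell\epsilon_{\ell,i}<\infty$, which is a telescoping sum;
\item $\lambda_\ell\geq\max_i\epsilon_{\ell,i}$, obtained by playing the uniform lower bounds $m$ and $\min\textrm{Spec}(\partial K,\alpha)$ on off-diagonal energies against diagonal energies tending to $0$.
\end{enumerate}
It then applies Lemma 5.5 of \cite{dgpz}.

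Your rescaling idea can also be completed. Suppose the matrices are triangular with rational entries and nonzero diagonal. Then the limit is an increasing union $M\subset\Lambda^k$, in stage-$\ell_0$ coordinates, with $\Lambda^k_{\geq\tau}\subseteq M\subseteq\Lambda^k_{\geq 0}$. Its $T$-adic topology coincides with the subspace topology, and $M$ is open, hence closed, in the complete module $\Lambda^k_{\geq 0}$. Therefore $\widehat M=M$. Either way, an argument of this kind must replace your final step.

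Triangularity also does not follow from Proposition \ref{pe} alone, which only sees trajectories staying inside the collar. You need two further arguments, both valid for $\ell$ large. The paper's limiting-energy argument excludes trajectories from shorter to longer orbits. Proposition \ref{mon} together with Proposition \ref{pe} excludes trajectories between distinct orbits of equal period.
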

\begin{proof}
    Let $\{\sigma_\ell\}$ be an increasing sequence of positive numbers such that $\displaystyle\lim_{\ell\to \infty} \sigma_\ell = \infty$ and $\sigma_\ell s_H \notin \textrm{Spec}(\partial K, \alpha)$. The generators of $CF^{>-\tau}_L(\sigma_\ell H)$ consist of critical points of $H$ inside $K$ and nonconstant 1-periodic orbits of $\sigma_\ell H$ lying in $\partial K \times [1,1+r_1]$ corresponding to Reeb orbits of $(\partial K, \alpha)$ with periods less than $\tau$. Since there are only finitely many such Reeb orbits, the number of nonconstant generators of $CF^{>-\tau}_L(\sigma_\ell H)$ stabilizes for sufficiently large $\ell$. Furthermore, we may assume that the perturbations of $\sigma_\ell H$ and $\sigma_{\ell+1}H$ have the same critical points on $S^1\times K$; for instance, the perturbation of $\sigma_\ell H$ may be chosen to be $\frac{1}{2^\ell}f$ for some fixed $C^2$-small negative Morse function $f$ on $K$. It follows that the complexes $CF^{>-\tau}_L(\sigma_\ell H)$ and $CF^{>-\tau}_L(\sigma_{\ell+1} H)$ have the same number of generators for sufficiently large $\ell$ and therefore both complexes can be regarded as $\Lambda_{\ge 0}^k$ for some integer $k \ge 1$.
    
    We claim that the continuation map, for sufficiently large $\ell\ge 1$,   
     $$c_L^\ell =c_L^{\sigma_\ell H, \sigma_{\ell+1}H} : CF^{>-\tau}_L(\sigma_\ell H) \lr CF^{>-\tau}_L(\sigma_{\ell+1} H)$$
     has the matrix form 
\begin{align*}
c^\ell_L = D_\ell + T^{\lambda_\ell} B_\ell    
\end{align*}
     where $D_\ell = \textrm{diag}\left(T^{\epsilon_{\ell,1}}, \cdots, T^{\epsilon_{\ell,k}} \right)$ is a diagonal matrix with $\sum_\ell \epsilon_{\ell,i} < \infty$ for each $i$, $\lambda_\ell \geq \max_i \epsilon_{\ell,i}$, and $B_\ell $ is a strictly upper triangular matrix whose entries are elements of $\Lambda_{\ge 0}$. Then the result follows from Lemma 5.5 of \cite{dgpz}.

     Note that the continuation map $c^\ell_L$ sends each constant orbit to itself. Moreover, $c^\ell_L$ cannot map a constant orbit to a nonconstant orbit of $\sigma_{\ell+1}H$, since the action increases along Floer trajectories.
    
    Let $u : \RR \times S^1 \lr M$ be a Floer trajectory connecting a nonconstant orbit $(\gamma_x,r_x)$ of $\sigma_\ell H$ to an orbit $y$ of $\sigma_{\ell+1}H$ for $r_x \in (1,1+r_1)$ and $\mathcal{A}(\gamma_x) < \tau$. If $y$ is a constant orbit, then, as observed earlier,
     \begin{align*}
         E_{\textrm{top}}(u) &= -\mathcal{A}_{\sigma_\ell H}(x) \ge \mathcal{A}(\gamma_x) \ge \min \textrm{Spec}(\partial K, \alpha).
        \end{align*}
     Hence, $E_{\mathrm{top}}(u)$ is bounded below by a fixed positive constant which is independent of $u$.

             Now assume that $y = (\gamma_y,r_y)$ is a nonconstant orbit of $\sigma_{\ell+1}H$ for $r_y \in (1,1+r_1)$ and $\mathcal{A}(\gamma_y)< \tau$. Since 
     \begin{align*}
         \mathcal{A}(\gamma_x) = \sigma_\ell h'(r_x)\,\,\textrm{and}\,\,\mathcal{A}(\gamma_y) = \sigma_{\ell+1} h'(r_{y}),
     \end{align*}
     the numbers $r_x$ and $r_y$ depend on $\ell$ and we write $r_x = r_{x,\ell}$ and $r_y = r_{y,\ell}$. Obviously, 
     \begin{align}\label{lim1}
         \lim_{\ell\to \infty}r_{x, \ell} = 1\,\,\textrm{and}\,\, \lim_{\ell\to \infty}r_{y, \ell} = 1.
     \end{align}
     Moreover, since
     \begin{align*}
        0\leq \sigma_\ell h(r_{x,\ell}) = \sigma_\ell \int_{1}^{r_{x,\ell}} h'(r)dr
         \leq \sigma_\ell (r_{x,\ell}-1)h'(r_{x,\ell}) =  (r_{x,\ell}-1) \mathcal{A}(\gamma_x),
     \end{align*}
     we have
     \begin{align}\label{lim2}
         \lim_{\ell \to \infty} \sigma_\ell h(r_{x,\ell}) = 0 \,\,\textrm{and}\,\, \lim_{\ell \to \infty} \sigma_{\ell+1} h(r_{y,\ell}) = 0.
     \end{align}

      Let $\gamma_1, \gamma_2,\cdots,\gamma_k$ be Reeb orbits of $(\partial K,\alpha)$ whose periods are less than $\tau$. First, we consider the case where $\gamma_x = \gamma_y =\gamma_i$ for some $i$. For notational convenience, we write $r_{x,\ell} = r_\ell$ and $r_{y,\ell} = r_{\ell+1}$. Then 
      \begin{align*}
          E_{\textrm{top}}(u) &= \sigma_{\ell+1}(-r_{\ell+1} h'(r_{\ell+1})+h(r_{\ell+1})) - \sigma_\ell(-r_{\ell} h'(r_{\ell})+h(r_{\ell}))\\
          &=\mathcal{A}(\gamma_i) (r_\ell- r_{\ell+1})+ \{\sigma_{\ell+1}h(r_{\ell+1})-\sigma_\ell h(r_\ell)\}.
      \end{align*}
     Note that 
    \begin{align*}
        \sum_{\ell=1}^\infty \left( \mathcal{A}(\gamma_i) (r_\ell- r_{\ell+1})+ \sigma_{\ell+1}h(r_{\ell+1})-\sigma_\ell h(r_\ell)\right) = r_1\mathcal{A}(\gamma_i) - \sigma_1 h(r_1)< \infty
    \end{align*}
    by \eqref{lim1} and \eqref{lim2}. 
    
    We now consider the case where $\gamma_x =\gamma_i$ and $\gamma_y =\gamma_j$ for $i \ne j$. If $\mathcal{A}(\gamma_x)< \mathcal{A}(\gamma_y)$, then 
    \begin{align}\label{topu}
        E_{\textrm{top}}(u)& = -r_{y,\ell} \mathcal{A}(\gamma_y) + \sigma_{\ell+1}h(r_{y,\ell}) +r_{x,\ell} \mathcal{A}(\gamma_x) - \sigma_{\ell}h(r_{x,\ell})
    \end{align}
    converges to $-\mathcal{A}(\gamma_y) +\mathcal{A}(\gamma_x) < 0$ as $\ell \to \infty$ due to \eqref{lim1} and \eqref{lim2}. This contradicts the nonnegativity of the topological energy. Therefore, we have $ \mathcal{A}(\gamma_x) \geq \mathcal{A}(\gamma_y)$.
   We can further show that $\mathcal{A}(\gamma_x) > \mathcal{A}(\gamma_y)$. If $\mathcal{A}(\gamma_x)= \mathcal{A}(\gamma_y)$, then the topological energy \eqref{topu} converges to zero by \eqref{lim1} and \eqref{lim2}. By Proposition \ref{mon}, we may assume that the image of $u$ is entirely contained in $\partial K \times [1-r_1,1+4r_1]$ for sufficiently large $\ell$. Proposition \ref{pe} then implies that $\gamma_x = \gamma_y$ but this contradicts our assumption that $\gamma_x=\gamma_i \ne \gamma_j = \gamma_y$, and hence
 \begin{align}\label{cope}
        \mathcal{A}(\gamma_x) > \mathcal{A}(\gamma_y).
    \end{align}
    Since $\mathcal{A}(\gamma_x) = \sigma_\ell h'(r_x)> \sigma_{\ell+1} h'(r_{y})=\mathcal{A}(\gamma_y)$ and $\sigma_\ell \le \sigma_{\ell+1}$, it follows that
    \begin{align*}
        h'(r_x) > h'(r_y).
    \end{align*}
    Then by the convexity of $h$ on $[1,1+r_1]$, we obtain
     \begin{align}\label{cor}
         r_{x,\ell}> r_{y,\ell}.
     \end{align}
      Then the topological energy of $u$ satisfies that
    \begin{align*}
        E_{\textrm{top}}(u)& = -r_{y,\ell} \mathcal{A}(\gamma_y) + \sigma_{\ell+1}h(r_{y,\ell}) +r_{x,\ell} \mathcal{A}(\gamma_x) - \sigma_{\ell}h(r_{x,\ell})\\
        & > r_{y,\ell} (-\mathcal{A}(\gamma_y)+\mathcal{A}(\gamma_x))+\sigma_{\ell+1}h(r_{y,\ell}) - \sigma_{\ell}h(r_{x,\ell}) &&\textrm{by \eqref{cor} } \\
        &\ge -\mathcal{A}(\gamma_y)+\mathcal{A}(\gamma_x)+\sigma_{\ell+1}h(r_{y,\ell}) - \sigma_{\ell}h(r_{x,\ell}) &&\textrm{by \eqref{cope} and }r_{y,\ell}\geq 1\\
        &\geq m  +\sigma_{\ell+1}h(r_{y,\ell}) - \sigma_{\ell}h(r_{x,\ell})
    \end{align*}
where 
\begin{align*}
    m = \min \left\{ \mathcal{A}(\gamma_i) -\mathcal{A}(\gamma_j) \bigmid \mathcal{A}(\gamma_i) >\mathcal{A}(\gamma_j) \,\,\textrm{and}\,\,1\leq i,j\leq k  \right\}>0.
\end{align*}
Since $\sigma_{\ell+1}h(r_{y,\ell}) - \sigma_{\ell}h(r_{x,\ell})$ converges to zero as $\ell \to \infty$ by \eqref{lim2}, for sufficiently large $\ell$, we have
\begin{align*}
     E_{\textrm{top}}(u) \ge m.
\end{align*}
Note that the lower bound $m$ does not depend on $u$.

The discussion above shows that the continuation map $c_L^\ell$ has the desired matrix form, and this completes the proof.\\
\end{proof}

\begin{lemma}\label{lem3}
     For any $K$-semi-admissible Hamiltonian function $H$ and for any $\tau\in\RR$, we have
\begin{align*}
  SH^{>-\tau}_M(K) \cong H\left(\displaystyle\varinjlim_{\sigma \to \infty} CF^{>-\tau}_L(\sigma H)\right).
\end{align*}
\end{lemma}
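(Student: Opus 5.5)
The plan is to combine Lemma \ref{lem1} and Lemma \ref{lem2} through the short exact sequence of complexes $0 \to CF_U(\sigma H) \to CF(\sigma H) \to CF_L(\sigma H) \to 0$. By Proposition \ref{nof}, $CF_U(\sigma H)$ is a subcomplex, and the same holds after restricting to the action window $>-\tau$, since both the differential and the continuation maps preserve the action filtration. For each $\tau$ this gives
\begin{align*}
    0 \lr CF^{>-\tau}_U(\sigma H) \lr CF^{>-\tau}(\sigma H) \lr CF^{>-\tau}_L(\sigma H) \lr 0 .
\end{align*}
Choose the increasing sequence $\sigma_\ell$ as in the previous lemmas, together with monotone homotopies satisfying \eqref{princ}; for instance, take $H_s=\beta(s)H$ with $\beta$ nondecreasing, so that $\partial_s h_s' = \beta'(s) h' \ge 0$. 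The continuation maps then respect this splitting by \eqref{conti}, so these sequences form a short exact sequence of direct systems. Since $\varinjlim$ is exact, we obtain a short exact sequence of direct limits.

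Next I would pass to completions. Each term is a free $\Lambda_{\ge 0}$-module: the limit of free modules with the given maps may not be free, but it is flat, since a direct limit of flat modules is flat. Therefore tensoring with $\Lambda_{\ge 0}/\Lambda_{\ge r}$ keeps the sequence exact, because $\mathrm{Tor}_1$ of the flat quotient term vanishes. By the Mittag--Leffler discussion preceding \eqref{reldef}, the completed sequence
\begin{align*}
    0 \lr \widehat{\varinjlim}\, CF^{>-\tau}_U(\sigma H) \lr \widehat{\varinjlim}\, CF^{>-\tau}(\sigma H) \lr \widehat{\varinjlim}\, CF^{>-\tau}_L(\sigma H) \lr 0
\end{align*}
is then exact.

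By Lemma \ref{lem1} the first term vanishes, so the middle and right terms are isomorphic as chain complexes. By Lemma \ref{lem2} the right term equals the uncompleted $\varinjlim CF^{>-\tau}_L(\sigma H)$. Taking cohomology and using the filtered version of \eqref{ivo}, namely $SH^{>-\tau}_M(K) = H(\widehat{\varinjlim} CF^{>-\tau}(\sigma H))$, yields the claim. Two further checks are needed: the isomorphism of Lemma \ref{lem2} must be a chain map, which holds because it is the natural completion map, and the result must not depend on the chosen sequence $\sigma_\ell$, which holds since cofinal subsequences give the same limits.

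The main obstacle is the exactness after completion, namely justifying the flatness and torsion-freeness needed so that tensoring with $\Lambda_{\ge 0}/\Lambda_{\ge r}$ preserves injectivity. If flatness is awkward, I would instead argue directly. The quotient $\varinjlim CF_L$ is, by the proof of Lemma \ref{lem2}, eventually a colimit of the modules $\Lambda_{\ge 0}^k$ along maps that are injective modulo $T$-adic issues, so its torsion-freeness over $\Lambda_{\ge 0}$ suffices, since torsion-free modules over a valuation ring are flat.
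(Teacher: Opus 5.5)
Your proposal is correct and follows the paper's proof essentially step for step. Both arguments use the short exact sequence $0\to CF_U^{>-\tau}(\sigma H)\to CF^{>-\tau}(\sigma H)\to CF_L^{>-\tau}(\sigma H)\to 0$, exactness of $\varinjlim$, flatness of $\varinjlim CF_L^{>-\tau}(\sigma H)$ to get $\mathrm{Tor}_1$ vanishing against $\Lambda_{\ge 0}/\Lambda_{\ge r}$, Mittag--Leffler for the completion, and finally Lemmas \ref{lem1} and \ref{lem2} together with the filtered form of \eqref{ivo}. Your extra remarks, namely the explicit homotopy $\beta(s)H$ satisfying \eqref{princ} and the torsion-free-over-a-valuation-ring fallback, are correct but go beyond what the paper needs.
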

\begin{proof}
      Consider the following short exact sequence of cochain complexes
    \begin{align*}
        0 \lr CF^{>-\tau}_U(\sigma H) \lr CF^{>-\tau}(\sigma H) \lr CF^{>-\tau}(\sigma H)/CF^{>-\tau}_U(\sigma H) = CF^{>-\tau}_L(\sigma H) \lr 0.
    \end{align*}
    From the discussion above, $\{CF^{>-\tau}_U(\sigma H)\}_{\sigma >0}$ and $\{CF^{>-\tau}_L(\sigma H)\}_{\sigma >0}$ form direct systems, where the maps are given by \eqref{conti}. Since the direct limit is an exact functor, the sequence
    \begin{align*}
        0 \lr \varinjlim_{\sigma \to \infty} CF^{>-\tau}_U(\sigma H) \lr \varinjlim_{\sigma \to \infty} CF^{>-\tau}(\sigma H) \lr \varinjlim_{\sigma \to \infty} CF^{>-\tau}_L(\sigma H) \lr 0
    \end{align*}
is exact. The cochain complex $CF^{>-\tau}_L(\sigma H)$ is a free module over $\Lambda_{\geq 0}$ generated by lower orbits of $\sigma H$ and hence is a flat module. As the direct limit preserves the flatness of a module, we have
\begin{align*}
    \text{Tor}_1^{\Lambda_{\geq 0}}\left(\varinjlim_{\sigma \to \infty}CF^{>-\tau}_{L}(\sigma H), A \right) = 0
\end{align*}
for any $\Lambda_{\geq 0}$-module $A$. Taking $A = \Lambda_{\ge0}/\Lambda_{\geq r}$ for $r>0$, the following sequence 
\begin{align*}
     0 \lr \varinjlim_{\sigma \to \infty} CF^{>-\tau}_U(\sigma H)\ton \Lambda_{\geq 0}/\Lambda_{\geq r} &\lr \varinjlim_{\sigma \to \infty} CF^{>-\tau}(\sigma H) \ton \Lambda_{\geq 0}/\Lambda_{\geq r}\\&\lr \varinjlim_{\sigma \to \infty} CF^{>-\tau}_L(\sigma H) \ton \Lambda_{\geq 0}/\Lambda_{\geq r}\lr 0
\end{align*}
remains exact. Applying the Mittag-Leffler theorem for inverse limits, we obtain the following exact sequence
\begin{align}\label{lmc}
     0 \lr \widehat{\varinjlim_{\sigma \to \infty}} CF^{>-\tau}_U(\sigma H) \lr \widehat{\varinjlim_{\sigma \to \infty}} CF^{>-\tau}(\sigma H) \lr \widehat{\varinjlim_{\sigma \to \infty}} CF^{>-\tau}_L(\sigma H) \lr 0.
\end{align}
Lemma \ref{lem1} and Lemma \ref{lem2} imply that the short exact sequence \eqref{lmc} reduces to the isomorphism of complexes
\begin{align}\label{reduce}
      \widehat{\varinjlim_{\sigma \to \infty}}CF^{>-\tau}(\sigma H) \cong \varinjlim_{\sigma \to \infty}CF^{>-\tau}_{L}(\sigma H).
  \end{align}
  Consequently,
  \begin{align*}
      SH^{>-\tau}_M(K) &\cong H\left( \widehat{\varinjlim_{\sigma \to \infty}} CF^{>-\tau}(\sigma H) \right) &&\textrm{by} \,\,\eqref{ivo}\\&\cong H\left( \varinjlim_{\sigma \to \infty}CF^{>-\tau}_{L}(\sigma H) \right) &&\textrm{by} \,\,\eqref{reduce}\\
      &\cong \varinjlim_{\sigma \to \infty}HF^{>-\tau}_{L}(\sigma H)
  \end{align*}
 where the last isomorphism follows from the fact that cohomology commutes with direct limits. This completes the proof. \\
\end{proof}

\noindent
\textit{Proof of Theorem \ref{a2t}.} For any $\sigma \ge1$, define $f_\sigma: [-i_H,0] \to [-i_{\sigma H},0]$ by $ f_\sigma(a) = \left(a_{\sigma H} \circ a_{ H}^{-1}\right) (a)$. By Lemma 3.5 of \cite{gt}, every Floer trajectory connecting two lower orbits is completely contained in $K_{1+r_1}=K \cup \left(\partial K \times [1,1+r_1]\right)$. Therefore, Proposition 3.1 of \cite{cggm2} yields an isomorphism
\begin{align*}
    HF_L^{>a}(H)\cong HF_L^{>f_\sigma(a)}(\sigma H).
\end{align*}
Moreover, this isomorphism is natural in the sense that they commute with inclusion maps and monotone homotopies. Setting $\tau = a_{ H}^{-1} (a)$, we obtain, for $\tau \in [0,s_H]$,
\begin{align}\label{dss}
    HF_L^{>a_H(\tau)}(H)\cong HF_L^{>a_{\sigma H}(\tau)}(\sigma H).
\end{align}
     Then for $\sigma_1 \le \sigma_2$, we have a map 
     \begin{align}\label{ds}
       HF_L^{>a_{\sigma_1 H}(\tau)}(\sigma_1 H) \lr HF_L^{>a_{\sigma_2 H}(\tau)}(\sigma_2 H)  
     \end{align}
     for $\tau \in [0,s_H]$, and therefore $\left\{ HF_L^{>a_{\sigma H}(\tau)}(\sigma H) \right\}_{\sigma \ge 1}$ forms a direct system together with maps \eqref{ds}. It was shown in \cite{cggm} that 
     \begin{align}\label{l}
         \lim_{\sigma \to \infty} a_{\sigma H}(\tau) = -\tau.
     \end{align}
     Consequently, for $\tau \in [0,s_H]$,
     \begin{align*}
         HF_L^{>a_H(\tau)}(H) &\cong \varinjlim_{\sigma \to \infty} HF_L^{>a_{\sigma H}(\tau)}(\sigma H)&&\textrm{by \eqref{dss}}\\
         &\cong SH^{>-\tau}_M(K) 
     \end{align*}
     where the last isomorphism follows from Lemma \ref{lem3} and the limit \eqref{l}. This finishes the proof of Theorem \ref{a2t}.\\
     \qed

\subsection{Some Floer theoretic barcode entropies}
It follows from Theorem \ref{a2t} that the assignment
$$\tau \mapsto SH^{>-\tau}_M(K;\Lambda)$$ defines a persistence module, whose structure maps
$$SH^{>-\tau_1}_M(K;\Lambda) \lr SH^{>-\tau_2}_M(K;\Lambda)$$ for $\tau_1 \leq\tau_2$ are induced by the inclusion maps. For notational convenience, we will omit $\Lambda$ from the notation throughout the remainder of the paper and simply write $SH_M(K)$ in place of $SH_M(K;\Lambda)$. We denote this persistence module by $SH_M(K)$. Let $\mathcal{B}(SH_M(K))$ be the barcode of the persistence module $SH_M(K)$. We define the \textbf{relative symplectic cohomology barcode entropy} $\hbar(SH_M(K))$ of $SH_M(K)$ by
\begin{align*}
    \hbar(SH_M(K)) = \lim_{\epsilon \to 0} \hbar_{\epsilon} (SH_M(K))
\end{align*}
where
\begin{align*}
    \hbar_{\epsilon} (SH_M(K)) = \limsup_{\sigma \to \infty} \frac{1}{\sigma}\log^+b_{\epsilon}(\textrm{tru}(SH_M(K), \sigma)).
\end{align*}

In view of Theorem \ref{a2t}, the relative symplectic cohomology barcode entropy can be expressed in terms of a single $K$-semi-admissible Hamiltonian function. To this end, for a $K$-semi-admissible Hamiltonian function $H$, define the \textbf{lower Hamiltonian barcode entropy} $ \hbar_L\left(H\right)$ by
\begin{align*}
    \hbar_L\left(H\right)=\lim_{\epsilon \to 0}\, (\hbar_L)_\epsilon\left(H\right)
\end{align*}
where
\begin{align*}
    (\hbar_L)_\epsilon\left(H\right)= \limsup_{\sigma \to \infty} \frac{1}{\sigma} \log^+ b_\epsilon\left( \textrm{tru}\left(HF_L(\sigma H), i_{\sigma H}\right) \right)
\end{align*}
for any $\epsilon>0$.
\begin{proposition}\label{pr3}
For any $K$-semi-admissible Hamiltonian function $H$, 
\begin{align*}
    \hbar\left(SH_M(K)\right) = \frac{1}{s_H}\hbar_L\left(H\right).
\end{align*}
    \end{proposition}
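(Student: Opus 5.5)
The plan is to compare the two persistence modules directly. Fix $\sigma\ge 1$ and apply Theorem \ref{a2t} to the $K$-semi-admissible Hamiltonian $\sigma H$, whose slope is $s_{\sigma H}=\sigma s_H$. This gives isomorphisms
\begin{align*}
SH^{>-\tau}_M(K)\cong HF_L^{>a_{\sigma H}(\tau)}(\sigma H), \qquad 0\le \tau\le \sigma s_H .
\end{align*}
These isomorphisms are natural with respect to inclusions of action windows, since they come from Proposition 3.1 of \cite{cggm2} and direct limits of continuation maps. So on $[0,\sigma s_H]$ the persistence module $\tau\mapsto SH^{>-\tau}_M(K)$ is the pullback of $\rho\mapsto HF_L^{>-\rho}(\sigma H)$ under the reparametrization $\rho=\phi_\sigma(\tau):=-a_{\sigma H}(\tau)$.

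Next I compute this reparametrization explicitly. By scaling, $a_{\sigma H}(\tau)=\sigma A_{h_1}\bigl((h_1')^{-1}(\tau/\sigma)\bigr)$. Since $A_{h_1}'(r)=-rh_1''(r)$, the chain rule gives $\phi_\sigma'(\tau)=r$, where $r=(h_1')^{-1}(\tau/\sigma)\in[1,1+r_1]$. Hence $\phi_\sigma$ is an increasing bi-Lipschitz homeomorphism with
\begin{align*}
\tau_2-\tau_1\le \phi_\sigma(\tau_2)-\phi_\sigma(\tau_1)\le (1+r_1)(\tau_2-\tau_1).
\end{align*}
For the endpoints, $\phi_\sigma(0)=0$ and $\phi_\sigma(\sigma s_H)=-\sigma A_{h_1}(1+r_1)=\sigma i_H=i_{\sigma H}$, using $h(1+r_1)=s_H(1+r_1)-i_H$. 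Lower orbits of $\sigma H$ have action in $[-i_{\sigma H},0]$, so all of $HF_L(\sigma H)$ lives in the window $\rho\in[0,i_{\sigma H}]$.

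From this, $\mathrm{tru}(SH_M(K),\sigma s_H)$ is the pullback of $\mathrm{tru}(HF_L(\sigma H),i_{\sigma H})$ under $\phi_\sigma$. Bars correspond bijectively, with the length of a bar multiplied by a factor in $[1,1+r_1]$; an infinite bar cut at the truncation level behaves the same way. Therefore
\begin{align*}
b_{(1+r_1)\epsilon}\bigl(\mathrm{tru}(HF_L(\sigma H),i_{\sigma H})\bigr)\le b_\epsilon\bigl(\mathrm{tru}(SH_M(K),\sigma s_H)\bigr)\le b_\epsilon\bigl(\mathrm{tru}(HF_L(\sigma H),i_{\sigma H})\bigr).
\end{align*}
Because $b_\epsilon(\mathrm{tru}(SH_M(K),\cdot))$ is increasing in the truncation level, the limsup over all levels $T\to\infty$ agrees with the limsup along $T=\sigma s_H$, up to the factor $1/s_H$ in the normalization. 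Dividing by $\sigma s_H$ and taking the limsup gives
\begin{align*}
\tfrac{1}{s_H}(\hbar_L)_{(1+r_1)\epsilon}(H)\le\hbar_\epsilon(SH_M(K))\le\tfrac{1}{s_H}(\hbar_L)_\epsilon(H).
\end{align*}
Letting $\epsilon\to0$ then gives the claimed equality.

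I expect the main obstacle to be the bookkeeping that makes the persistence module structures genuinely match. Three points need care:
\begin{itemize}
\item the isomorphism of Theorem \ref{a2t} for $\sigma H$ must commute with the structure maps at all pairs $\tau_1\le\tau_2$, not only with the limits;
\item the conventions ``$>-\tau$'' versus ``$>a$'' and the semicontinuity at bar endpoints must be compatible, so that bars really are the images of intervals under $\phi_\sigma$;
\item the truncation at $\sigma s_H$ must correspond exactly to truncation at $i_{\sigma H}$, including behaviour at the endpoint and the perturbation of $\sigma H$, which must not add lower orbits beyond the window.
\end{itemize}
I would handle these by invoking the naturality statement in the proof of Theorem \ref{a2t} and checking the interval decomposition under an increasing homeomorphism of $\RR$ directly.
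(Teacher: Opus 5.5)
Your proposal is correct and follows essentially the same route as the paper. The derivative bound $\frac{d}{d\tau}a_{\sigma H}(\tau)=-(h_1')^{-1}(\tau/\sigma)\in[-(1+r_1),-1]$, combined with Theorem \ref{a2t}, gives the same sandwich $b_{(1+r_1)\epsilon}(\mathrm{tru}(HF_L(\sigma H),\sigma i_H))\le b_\epsilon(\mathrm{tru}(SH_M(K),\sigma s_H))\le b_\epsilon(\mathrm{tru}(HF_L(\sigma H),\sigma i_H))$, after which you divide by $\sigma s_H$, take $\limsup$, and let $\epsilon\to 0$, exactly as the paper does; your extra checks (the scaling $a_{\sigma H}(\tau)=\sigma a_H(\tau/\sigma)$, the endpoint identity $\phi_\sigma(\sigma s_H)=i_{\sigma H}$, and monotonicity in the truncation level) only make explicit what the paper leaves implicit.
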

\begin{proof}
    Observe that for any $K$-semi-admissible Hamiltonian function $H$,
    \begin{align*}
        \frac{d}{d \tau} a_H(\tau) &= \frac{d}{d\tau} A_{h_1} \left((h_1')^{-1} (\tau) \right) \\&= \frac{d}{ d\tau} \left( -(h_1')^{-1} (\tau)\tau +h_1\left((h_1')^{-1} (\tau)\right) \right)\\
        &= -\left(\frac{d}{d\tau} (h_1')^{-1} (\tau)\right) \tau - (h_1')^{-1} (\tau) + h_1'\left((h_1')^{-1} (\tau)\right)  \left(\frac{d}{d\tau} (h_1')^{-1} (\tau)\right)\\
        &= - (h_1')^{-1} (\tau)
          \end{align*}
    where we used $h_1'((h_1')^{-1})(\tau)=\tau$. In particular, for $0\leq \tau \leq s_H$,
    \begin{align*}
     -1-r_1 \leq    \frac{d}{d \tau} a_H(\tau) \leq -1.
    \end{align*}
   Hence, we obtain
\begin{align}\label{length}
\tau_2 - \tau_1 \leq a_H(\tau_1) - a_H(\tau_2)
\leq (1+r_1)(\tau_2 - \tau_1).
\end{align}
for $0\leq \tau \leq s_H$. Since \eqref{length} holds for any $H$, it follows that 
\begin{align*}
     \tau_2 - \tau_1 \leq a_{\sigma H}(\tau_1) - a_{\sigma H}(\tau_2) \leq (1+r_1)(\tau_2 - \tau_1)
\end{align*}
for $0\leq \tau_1 \leq \tau_2 \leq s_{\sigma H} = \sigma s_H$ for any $\sigma >0$. These inequalities, together with Theorem~\ref{a2t}, imply that for any $\epsilon>0$,
\begin{align*}
    b_{(1+r_1)\epsilon}\left(\textrm{tru}\left(HF_L(\sigma H), \sigma i_H\right)\right) \leq b_{\epsilon}\left(\textrm{tru}\left(SH_M(K), \sigma s_H\right)\right)\leq b_{\epsilon}\left(\textrm{tru}\left(HF_L(\sigma H), \sigma i_H\right)\right).
\end{align*}
Consequently,
\begin{align*}
    \left(\hbar_L\right)_{(1+r_1)\epsilon}(H) \leq s_H \hbar_\epsilon(SH_M(K)) \leq \left(\hbar_L\right)_{\epsilon}(H).
\end{align*}
Taking the limit as $\epsilon \to 0$,
\begin{align*}
    \hbar_L(H) \leq s_H \hbar(SH_M(K))\leq \hbar_L(H),
\end{align*}
which completes the proof.\\
\end{proof}

\section{Proof of the Main result}\label{sec:section4}
In this section, we prove the main result of the paper, saying that the relative symplectic cohomology barcode entropy is bounded from below by the topological entropy of the Reeb flow restricted to a hyperbolic set. The key ingredient in the proof is the following \textit{crossing energy theorem}, which we now state.
\begin{theorem}\label{cros}
    Let $V$ be a locally maximal hyperbolic set for the Reeb flow $\varphi_\alpha^t$ on $(\partial K,\alpha)$. Fix an interval $[a,b] \subset (1, 1+r_1)$. Let $H$ be a $K$-semi-admissible Hamiltonian function such that $h''' \geq 0$ on $[1,b']$ for some $b'>b$ with $b' <1+r_1$. For $\sigma >1$, let $x = (\gamma,r_*)$ and $y$ be lower orbits of $\sigma H$ such that $\gamma$ is contained in $V$ and $r_* \in [a,b]$. Then there exists $\epsilon_0>0$ such that $E_{\textrm{top}}(u) > \epsilon_0$ for any Floer trajectory $u : \RR \times S^1 \lr M$ of $\sigma H$ connecting $x$ and $y$. Moreover, the constant $\epsilon_0 > 0$ is independent of $\sigma, u$ and $x$.
\end{theorem}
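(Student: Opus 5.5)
Assume, for contradiction, that there are $\sigma_k>1$, lower orbits $x_k=(\gamma_k,r_k)$ with $\gamma_k\subset V$ and $r_k\in[a,b]$, lower orbits $y_k$, and Floer trajectories $u_k$ of $\sigma_k H$ from $x_k$ to $y_k$ with $E_{\mathrm{top}}(u_k)\to 0$. The plan is to derive a contradiction in the spirit of the crossing energy theorems of \cite{cgg,cggm2}. That argument has three steps: localize $u_k$ to the collar, pass to a limit in which the slope $\sigma_k h'$ plays the role of time reparametrization, and use local maximality of $V$ to forbid a small-energy connection that leaves a neighbourhood of $x_k$.

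\textbf{Step 1 (localization).} By Proposition \ref{mon}, for $k$ large the image of $u_k$ lies in $\partial K\times[1-r_1,1+4r_1]$. By Lemma 3.5 of \cite{gt}, since $y_k$ is a lower orbit, the trajectory lies in $K_{1+r_1}$. So we may write $u_k=(v_k,r_k(s,t))$ wherever the trajectory sits in the collar.

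I then rescale time. Since $X_{\sigma H}=-\sigma h'(r)R_\alpha$ and $\sigma_k h'(r_k)=\mathcal{A}(\gamma_k)$, I substitute $\tilde v_k(s,t)=v_k(s/T_k,t/T_k)$ with $T_k=\mathcal{A}(\gamma_k)$, or equivalently reparametrize the Reeb flow by the period. The equations \eqref{reeb1} then become a Floer-type equation for the symplectization Hamiltonian $\sigma_k h(r)$, and the energy splits into a $d\alpha$-part on $\xi$ and a radial part.

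\textbf{Step 2 (radial control).} This is where the hypothesis $h'''\ge0$ on $[1,b']$ enters. I expect it to be the main obstacle. The goal is to show that small energy forces $r$ to stay close to $r_k$, and in particular inside $[a-\delta,b+\delta]\subset(1,b')$, so that the effective Reeb speed $\sigma_k h'(r)$ stays uniformly comparable to $\mathcal{A}(\gamma_k)$ along the part of $u_k$ near $x_k$. I would compare $\int_{S^1}\ln r$ with the action, as in the proof of Proposition \ref{nof}, and use convexity of $h'$ to bound the deviation $|r-r_k|$ by the energy. I would also follow Proposition 3.1 of \cite{cggm2}, which converts a convex/Reeb-type Hamiltonian into the contact setting with controlled action. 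If $y_k$ is a constant orbit, the energy is at least $\mathcal{A}(\gamma_k)\ge\min\mathrm{Spec}(\partial K,\alpha)$, so we may assume $y_k$ is nonconstant.

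\textbf{Step 3 (hyperbolic contradiction).} Fix an isolating neighbourhood $U$ of $V$ and let $U'$ be a smaller neighbourhood. I would first show that each $u_k$ must exit $U'$ in the $\partial K$-component, at least in its $\xi$-projected part. If it did not, the trajectory would stay near $V$, where the hyperbolic splitting gives uniform exponential behaviour of the linearized operator. The argument of \cite{cgg}, in its flow version in \cite{cggm2}, then shows that a Floer cylinder with energy below a threshold, staying in $U'$ and asymptotic at one end to an orbit in $V$, must be constant. Here one uses that $y_k\ne x_k$, with Proposition \ref{pe} handling the case where they coincide.

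If instead $u_k$ exits $U'$, I would take a point $(s_k,t_k)$ where $v_k$ first leaves $U'$ and shift $u_k$ so that this point is at $s=0$. Because the energy tends to $0$, the gradient is small, via the usual bubbling/monotonicity bound, which uses $\omega|_{\pi_2}=0$. Hence on long $s$-intervals $v_k(s,\cdot)$ is $C^0$-close to a Reeb trajectory segment of length $\approx\mathcal{A}(\gamma_k)$. After also handling the case where these lengths are unbounded, a diagonal argument gives a limiting complete Reeb trajectory that stays in $\overline{U}$ but passes through $\partial U'$, and so is not contained in $V$. This contradicts local maximality.

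All constants obtained depend only on $V$, $U$, $[a,b]$, $J$ and the function $h$ near $[a,b]$, not on $\sigma$, $u$ or $x$. This gives the uniform $\epsilon_0$.
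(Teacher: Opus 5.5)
Your proposal follows the same route as the paper. You localize to the collar $\partial K\times[1-r_1,1+r_1]$ using the monotonicity bound of Proposition \ref{mon} and the maximum principle of \cite{gt}, and then rely on the crossing-energy argument of \cite{cggm2}. The paper simply cites Theorems 5.1 and 6.1 of \cite{cggm2} for trajectories inside the collar, whereas your Steps 2--3 only outline their content (radial control via $h'''\ge 0$, local maximality for trajectories exiting a neighbourhood of $V$, hyperbolicity for those that stay), so the step you flag as the main obstacle is exactly what that citation supplies.
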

\begin{proof}
By Lemma 3.5 of \cite{gt}, the image of $u$ lies in $K \cup \left(\partial K \times [1,1+r_1]\right)$. If the image of $u$ is not entirely contained in $\partial K \times [1-r_1,1+r_1]$, then then by the same monotonicity argument as in Proposition \ref{mon}, we obtain a positive number $\eta_1$, independent of $\sigma$, $u$ and $x$, such that $E_{\textrm{top}}(u) > \eta_1$. On the other hand, if the image of $u$ is completely contained in $\partial K\times[1-r_1,1+r_1]$, then Theorem 5.1 and Theorem 6.1 of \cite{cggm2} imply that there exists a positive constant $\eta_2$, again independent of $\sigma$, $u$ and $x$, such that $E_{\textrm{top}}(u) > \eta_2$. Taking $\epsilon_0 = \min\{\eta_1, \eta_2\}$, the proof is complete. \\ 
\end{proof}

\begin{theorem}\label{mainthm}
    For a compact hyperbolic set $V$ for $\varphi_\alpha^t$, we have 
    \begin{align*}
      h_{\textrm{top}}(\varphi^t_\alpha|_V) \leq \hbar (SH_M(K)). 
    \end{align*}
\end{theorem}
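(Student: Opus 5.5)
We reduce first to a locally maximal hyperbolic set, then count periodic Reeb orbits in it, and finally show that each such orbit produces its own not-too-short bar in the lower Floer cohomology of a single Hamiltonian.

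\textbf{Step 1: reduction to the locally maximal case.} By the standard approximation result for hyperbolic sets (Katok-type/Fisher), for every $\delta>0$ there is a locally maximal hyperbolic set $V'$ with
\[
h_{\textrm{top}}(\varphi^t_\alpha|_{V'})\ge h_{\textrm{top}}(\varphi^t_\alpha|_V)-\delta .
\]
Hence it suffices to treat locally maximal $V$. By Theorem \ref{alttop}, $h_{\textrm{top}}(\varphi^t_\alpha|_V)$ is then the exponential growth rate of the number $p(T)$ of periodic orbits of $\varphi^t_\alpha$ in $V$ of period at most $T$. We need these orbits to be contractible; the paper's convention presumably counts contractible ones, or one restricts to a single free homotopy class. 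Hyperbolicity makes them nondegenerate.

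\textbf{Step 2: a single good Hamiltonian.} Fix $[a,b]\subset(1,1+r_1)$ and a $K$-semi-admissible $H$ with $h'''\ge0$ on $[1,b']$. For $\sigma$ large, every Reeb orbit $\gamma\subset V$ with period in $[\sigma h'(a),\sigma h'(b)]$ gives a lower orbit $x_\gamma=(\gamma,r_*)$ of $\sigma H$ with $r_*\in[a,b]$. This orbit is degenerate in the $S^1$-direction. After a small perturbation it splits into two nondegenerate orbits; we keep track of one generator of the pair.

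By Theorem \ref{cros}, any Floer trajectory of $\sigma H$ starting or ending at such $x_\gamma$ has energy greater than $\epsilon_0$, uniformly in $\sigma$. A perturbation with $S^1$-splitting much smaller than $\epsilon_0$ keeps this bound for trajectories between different orbits. This is the main technical point, and I would handle it as in \cite{cggm2}, with local Floer cohomology of $x_\gamma$ being nonzero.

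The standard barcode argument (e.g.\ \cite{cgg,cggm2}) then applies. If a generator has all differentials into it and out of it weighted by $T^{>\epsilon_0}$, one shows that each such generator contributes an endpoint of a distinct bar of length at least $\epsilon_0$ in $HF_L(\sigma H)$; in the truncation the bar either persists or has length greater than $\epsilon_0$. Since the count is by half-bars, we get
\[
b_{\epsilon}\bigl(\textrm{tru}(HF_L(\sigma H),i_{\sigma H})\bigr)\ge \tfrac12\, \#\{\gamma\subset V:\ \mathcal{A}(\gamma)\in[\sigma h'(a),\sigma h'(b)]\}\quad\text{for } \epsilon<\epsilon_0 .
\]

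\textbf{Step 3: conclusion.} The orbits with period in $[\sigma h'(a),\sigma h'(b)]$ have counting function growing like $e^{\sigma h'(b)h_{\textrm{top}}}$, along a subsequence, up to subexponential factors, since the contribution of periods below $\sigma h'(a)$ is exponentially smaller. This gives
\[
(\hbar_L)_\epsilon(H)\ge h'(b)\,h_{\textrm{top}}(\varphi_\alpha^t|_V).
\]
Proposition \ref{pr3} then yields
\[
\hbar(SH_M(K))\ge \frac{h'(b)}{s_H}\,h_{\textrm{top}}(\varphi^t_\alpha|_V).
\]
Finally, choose $H$ so that $h'(b)/s_H$ is arbitrarily close to $1$: take $b$ near $1+r_1$ with $h_1$ nearly linear of slope close to $s_H$ near $b$, while keeping $h'''\ge0$ on $[1,b']$. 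Letting the ratio tend to $1$ and $\delta\to0$ completes the proof.

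\textbf{Main obstacle.} The delicate step is the bar-counting in Step 2: passing from the uniform crossing-energy bound in Theorem \ref{cros} to distinct bars after the perturbation, and checking that the truncation at $i_{\sigma H}$ does not cut these bars below $\epsilon$.
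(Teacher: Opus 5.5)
Your proposal follows the paper's proof essentially step for step. The shared steps are: reduce to a locally maximal $V$; count the orbits of $\sigma H$ over $V$ with $r_*\in(a,b]$, discarding those of period below $\sigma h'(a)$ as exponentially negligible; combine Theorem \ref{cros} with Proposition 3.8 of \cite{cgg} to get $b_\epsilon\ge\frac12 p_H(\sigma)$; and finish with Proposition \ref{pr3} and $h'(b)/s_H\to 1$.

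Your last step is in fact more careful than the paper's. You vary $H$ and impose $h'''\ge 0$ only on $[1,b']$, whereas the paper fixes $H$ with $h'''\ge 0$ on all of $[1,1+r_1]$. That cannot hold for a smooth strictly convex $h_1$ joining the linear part, and the paper then simply lets $b\to 1+r_1$. The contractibility caveat you raise is likewise left implicit in the paper: it counts all periodic orbits in $V$ via Theorem \ref{alttop}, while $HF_L(\sigma H)$ only sees contractible ones.
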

\begin{proof}
By the variational principle for topological entropy (see \cite{kh} and \cite{th}), for every $\delta>0$, there exists a locally maximal hyperbolic invariant set $V'$ for $\varphi_\alpha^t$ such that
\begin{align*}
    h_{\mathrm{top}}(\varphi_\alpha^t|_{V'})
    \geq
    h_{\mathrm{top}}(\varphi_\alpha^t|_{V})-\delta.
\end{align*}
Therefore, after replacing $V$ by $V'$, we may assume that $V$ is not only hyperbolic and invariant, but also locally maximal for the Reeb flow $\varphi_\alpha^t$.

   For $\sigma>0$, let $p_\alpha(\sigma)$ be the number of periodic orbits of $\varphi_\alpha^t|_V$ whose periods are less than or equal to $\sigma$. Since $V$ is locally maximal and hyperbolic, Theorem \ref{alttop} implies that the topological entropy $h_{\textrm{top}}(\varphi_\alpha^t|_V)$ is given by
   \begin{align*}
       h_{\textrm{top}}(\varphi_\alpha^t|_V) = \limsup_{\sigma \to \infty} \frac{1}{\sigma} \log^+ p_\alpha(\sigma).
   \end{align*}
   Let $H$ be a $K$-semi-admissible Hamiltonian function satisfying $h'''\ge 0$ on $[1,1+r_1]$. Fix an interval $(a,b] \subset (1,1+r_1)$. Let $p_H(\sigma)$ be the number of nonconstant 1-periodic Hamiltonian orbits $(\gamma,r_*)$ of $\sigma H$ such that the Reeb orbit $\gamma$ lies in $X$ and $r_* \in (a,b]$. Note that $p_H(\sigma) \le p_\alpha(\sigma s_H)$ because every 1-periodic orbit of $\sigma H$ corresponds to a Reeb orbit with period less than $\sigma s_H$. Moreover, 
   \begin{align}\label{h}
       p_H(\sigma) = p_\alpha(\sigma h'(b)) - p_\alpha(\sigma h'(a))
   \end{align}
   because the radial coordinate satisfies $r_* \in (a,b]$. Observe that
\begin{align*}
  & h'(b) h_{\textrm{top}}(\varphi_\alpha^t|_V)\\ &= \limsup_{\sigma \to \infty} \frac{1}{\sigma} \log^+ p_\alpha( \sigma h'(b)) \\
    &\overset{(*)}{\leq} \limsup_{\sigma \to \infty} \frac{1}{\sigma} \log^+ \max\{ 2( p_\alpha(\sigma h'(b)) - p_\alpha( \sigma h'(a))), 2 p_\alpha( \sigma h'(a))\}\\
    & = \max \left\{ \limsup_{\sigma \to \infty} \frac{1}{\sigma} \log^+ \left(2( p_\alpha(\sigma h'(b)) - p_\alpha( \sigma h'(a))\right),  \limsup_{\sigma \to \infty} \frac{1}{\sigma} \log^+ 2 p_\alpha( \sigma h'(a)) \right\}\\
    &= \max \left\{ \limsup_{\sigma \to \infty} \frac{1}{\sigma} \log^+ 2p_{H} (\sigma), \limsup_{\sigma \to \infty} \frac{1}{\sigma} \log^+ 2 p_\alpha( \sigma h'(a))\right\} &&\textrm{by}\,\,\eqref{h}\\
    &= \max \left\{ \limsup_{\sigma \to \infty} \frac{1}{\sigma} \log^+ p_{H} (\sigma), h'(a) h_{\textrm{top}}(\varphi_\alpha^t|_V) \right\}. 
\end{align*}
The inequality $(*)$ follows from the elementary observation that
\begin{align*}
    x=(x-y)+y\leq \max\{2(x-y),2y\}
\end{align*}
for all $x,y\in\RR$. Since $h$ is strictly convex on $[1,1+r_1]$, we have $h'(a)< h'(b)$, and hence,
\begin{align}\label{topp}
    h'(b) h_{\textrm{top}}(\varphi_\alpha^t|_V) \le\displaystyle \limsup_{\sigma \to \infty} \frac{1}{\sigma} \log^+ p_{H} (\sigma).
\end{align}

Let $\epsilon_0 >0$ be the constant from Theorem \ref{cros}. Then every Floer trajectory of $\sigma H$ which is asymptotic to a 1-periodic orbit $(\gamma,r_*)$ of $\sigma H$ such that $\gamma$ is contained in $V$ and $r_* \in (1,1+r_1)$ has topological energy greater than $\epsilon_0$ by Theorem \ref{cros}. Hence, for every $\epsilon \in (0,\epsilon_0)$, we have 
\begin{align}\label{bp}
    b_\epsilon\left(\textrm{tru}(HF_L(\sigma H),i_{\sigma H})\right) \ge \displaystyle\frac{1}{2} p_H(\sigma)
\end{align}
by Proposition 3.8 of \cite{cgg}.

Now we are ready to complete the proof. Since the assignment $\epsilon \mapsto b_\epsilon\left(\textrm{tru}(HF_L(\sigma H),i_{\sigma H})\right)$ is decreasing in $\epsilon$, it follows that
\begin{align*}
   \hbar (SH_M(K)) &= \frac{1}{s_H} \hbar_L(H) &&\textrm{by Proposition \ref{pr3} }\\& \geq \frac{1}{s_H} \limsup_{\sigma \to \infty} \frac{1}{\sigma}\log^+ b_\epsilon\left(\textrm{tru}(HF_L(\sigma H),i_{\sigma H})\right)
\end{align*}
for every $\epsilon >0$. Choose $\epsilon>0$ sufficiently small so that \eqref{bp} holds. Then
\begin{align*}
    \hbar (SH_M(K))& \geq \frac{1}{s_H} \limsup_{\sigma \to \infty} \frac{1}{\sigma}\log^+ b_\epsilon\left(\textrm{tru}(HF_L(\sigma H),i_{\sigma H})\right)\\
    & \ge \frac{1}{s_H} \limsup_{\sigma \to \infty} \frac{1}{\sigma}\log^+ \frac{1}{2} p_H(\sigma)&&\textrm{by \eqref{bp}}\\
    &\ge \frac{1}{s_H} h'(b) h_{\textrm{top}}(\varphi_\alpha^t|_V) &&\textrm{by \eqref{topp}}.
\end{align*}
Since the interval $(a,b]\subset(1,1+r_1)$ was arbitrary, we may let $b\to1+r_1$, in which case $ h'(b)\to s_H$, and hence the theorem follows.\\
\end{proof}

\Addresses

\begin{thebibliography} {99}
\bibitem{a} J. Ahn, $S^1$-equivariant relative symplectic cohomology and relative symplectic capacities, preprint,  arXiv:2410.01977.

\bibitem{a2} J. Ahn, Barcode entropy and relative symplectic cohomology, preprint, 	arXiv:2601.15606.

\bibitem{a22} J. Ahn, Comparison of symplectic capacities, preprint, arXiv:2504.10431.

%\bibitem{bo} F. Bourgeois and A. Oancea, An exact sequence for contact- and symplectic homology, Inventiones Mathematicae, 175 (2009), no. 3, 611–680.

\bibitem{cgg}  E. Cineli, V. Ginzburg and B. Gurel, Topological entropy of Hamiltonian diffeomorphisms: a persistence homology and Floer theory perspective, Mathematische Zeitschrift, 308 (2024), no. 4, Paper No. 73, 38 pp.

\bibitem{cggm} E. Cineli, V. Ginzburg, B. Gurel and M. Mazzucchelli, On the barcode entropy of Reeb flows, Selecta Mathematica. New Series,
 31 (2025), no. 4, Paper No. 64, 36 pp.

\bibitem{cggm2} E. Cineli, V. Ginzburg, B. Gurel and M. Mazzucchelli, Invariant sets and hyperbolic closed Reeb orbits, preprint, arXiv:2309.04576

%\bibitem{co}  K. Cieliebak and A. Oancea, Symplectic homology and the Eilenberg–Steenrod axioms, Algebraic and Geometric Topology, 18 (2018), no. 4, 1953 – 2130.

\bibitem{csgo} F. Chazal, V. de Silva, M. Glisse and S. Oudot, The structure and stability of persistence modules, SpringerBriefs in Mathematics, Springer, [Cham], 2016. x+120 pp, ISBN:978-3-319-42543-6, ISBN:978-3-319-42545-0

%\bibitem{cz} C. Conley and E. Zehnder, Morse-type index theory for flows and periodic solutions for Hamiltonian equations, Communications on Pure and Applied Mathematics, 37 (1984), no. 2, 207 – 253.

\bibitem{dgpz} A. Dickstein, Y. Ganor, L. Polterovich and F. Zapolsky, Symplectic topology and ideal-valued measures, Selecta Mathematica. New Series, 30 (2024), no. 5, Paper No. 88, 92 pp.

\bibitem{f} R. Fernandes, Barcode entropy and wrapped Floer homology, prerprint, arXiv:2410.05528.

\bibitem{f2} R. Fernandes, Wrapped Floer homology and hyperbolic sets, prerprint, arXiv:2501.06654.

\bibitem{fls} E. Fender, S. Lee and B. Sohn, Barcode entropy for Reeb flows on contact manifolds with Liouville fillings, preprint, arXiv:2305.04770.

%\bibitem{g} M. Gromov, Entropy, homology and semialgebraic geometry, Séminaire Bourbaki, Vol. 1985/86, Astérisque No. 145-146 (1987), 5, 225–240.

%\bibitem{gg14} V. Ginzburg and B. Gurel, Hyperbolic fixed points and periodic orbits of Hamiltonian diffeomorphisms, Duke Mathematical Journal, 163 (2014), no. 3, 565–590.

%\bibitem{gg18}  V. Ginzburg and B. Gurel, Hamiltonian pseudo-rotations of projective spaces, Inventiones Mathematicae, 214 (2018), no. 3, 1081–1130.

\bibitem{ggm} V. Ginzburg, B. Gurel and M. Mazzucchelli, Barcode entropy of geodesic flows, to appear in Journal of the European Mathematical Society. 

\bibitem{gt} Y. Ganor and S. Tanny, Floer theory of disjointly supported Hamiltonians on symplectically aspherical manifolds, Algebraic and Geometric Topology, 23 (2023), no. 2, 645 – 732.

\bibitem{kh} A. Katok and B. Hasselblatt, Introduction to modern theory of dynamical systems, Encyclopedia of Mathematics and its Applications, 54, Cambridge University Press, Cambridge (1995) xviii+802 pp, ISBN:0-521-34187-6.

%\bibitem{l} Y. Long, Precise iteration formulae of the Maslov-type index theory and ellipticity of closed characteristics, Advances in Mathematics, 154 (2000), no. 1, 76–131.

%\bibitem{l2} Y. Long, Index theory for symplectic paths with applications, Progress in Mathematics,  207, Birkhäuser Verlag, Basel, (2002) xxiv+380 pp, ISBN:3-7643-6647-8.

\bibitem{m} M. Meiwes, On the barcode entropy of Lagrangian submanifolds, preprint, arXiv:2401.07034.

\bibitem{ms} D. McDuff and D. Salamon, $J$ -holomorphic curves and quantum cohomology, University Lecture Series, 6, American Mathematical Society, Providence, RI, 1994. viii+207 pp.


\bibitem{msv} C.Y. Mak, Y. Sun and U. Varolgunes, A characterization of heaviness in terms of relative symplectic cohomology, Journal of Topology, 17 (2024), no. 1, Paper No. e12327, 26 pp.

%\bibitem{p} J. Pardon, Contact homology and virtual fundamental cycles, Journal of American Mathematical Society, 32 (2019), no. 3, 825–919.

\bibitem{prsz} L. Polterovich, D. Rosen, K. Samvelyan and J. Zhang, Topological persistence in geometry and analysis, University Lecture Series, 74, American Mathematical Society, Providence, RI, (2020), ©2020. xi+128 pp, ISBN:978-1-4704-5495-1. 

\bibitem{ps} L. Polterovich, E. Shelukhin, Autonomous Hamiltonian flows, Hofer's geometry and persistence modules, Selecta Mathematica. New Series, 22 (2016), no. 1, 227 – 296.

%\bibitem{sala2} D. Salamon, Morse theory, the Conley index and Floer homology, The Bulletin of the London Mathematical Society, 22 (1990), no. 2, 113–140.


\bibitem{sala} D. Salamon, Lectures on Floer homology, Symplectic geometry and topology (Park City, UT, 1997), IAS/Park City Mathematics Series, 7, 143 – 229.

%\bibitem{sz} D. Salamon and E. Zehnder, Morse theory for periodic solutions of Hamiltonian systems and the Maslov index, Communications on Pure and Applied Mathematics, 45 (1992), 1303 – 1360.

\bibitem{sun} Y. Sun, Index bounded relative symplectic cohomology, Algebraic and Geometric Topology, 24 (2024), no. 9, 4799–4836.

\bibitem{th} F. Todd and B. Hasselblatt, Hyperbolic flows, Zurich Lectures in Advanced Mathematics, EMS Publishing House, Berlin, [2019], ©2019. xiv+723 pp, 
ISBN:978-3-03719-200-9.

%\bibitem{uz} M. Usher and J. Zhang, Persistent homology and Floer-Novikov theory, Geometry and Topology, 20 (2016), no. 6, 3333–3430.

\bibitem{v} U. Varolgunes, Mayer–Vietoris property for relative symplectic cohomology, Geometry and Topology, 25 (2021), 547 - 642. 

%\bibitem{vt} U. Varolgunes, Mayer–Vietoris property for relative symplectic cohomology, Ph.D. thesis, Massachusetts Institute of Technology.

%\bibitem{y} Y. Yomdin, Volume growth and entropy, Israel Journal of Mathematics, 57 (1987), 285 - 300.




\end{thebibliography}
\end{document}